\newcommand{\blind}{0}
\newtheorem{theorem}{Theorem}
\newtheorem{proposition}{Proposition}
\theoremstyle{definition}
\newtheorem*{definition}{Definition}
\theoremstyle{remark}
\newtheorem*{remark}{Remark}
\def \a{\alpha}
\def \d{\delta}
\def \e{\epsilon}
\def \g{\gamma}
\def \s{\sigma}
\def \R{\mathbb{R}}
\def \P{\mathbb{P}}
\def \E{\mathbb{E}}
\def \Ri{\mathcal{R}}
\def \Mi{\mathcal{M}}
\def \1{{\bf{1}}}
\def \b{\boldsymbol}
\title{A Random Field Model and its Application in \\ Industrial Production.}
\author{Julie OGER$^{\ast\dag}$, Emmanuel LESIGNE$^\ast$, Philippe LEDUC$^\dag$ \\
$^\ast$ Université François-Rabelais, CNRS, LMPT UMR7350, Tours, France \\
$^\dag$ STMicroelectronics, Tours, France}
\author{}
\date{}
\begin{document}
\maketitle

\begin{abstract}
In competitive industries, a reliable yield forecasting is a prime factor to accurately determine the production costs and therefore ensure profitability.
Indeed, quantifying the risks long before the effective manufacturing process enables fact-based decision-making.
From the development stage, improvement efforts can be early identified and prioritized.
In order to measure the impact of industrial process fluctuations on the product performances, the construction of a failure risk probability estimator is presented in this article.
The complex relationship between the process technology and the product design (non linearities, multi-modal features...) is handled via random process regression.
A random field encodes, for each product configuration, the available information regarding the risk of non-compliance.
After a brief presentation of the Gaussian model approach, we describe a Bayesian reasoning avoiding a priori choices of location and scale parameters.
The Gaussian mixture prior, conditioned by measured (or calculated) data, yields a posterior characterized by a multivariate Student distribution.
The probabilistic nature of the model is then operated to derive a failure risk probability, defined as a random variable.
To do this, our approach is to consider as random all unknown, inaccessible or fluctuating data.
In order to propagate uncertainties, a fuzzy set approach provides an appropriate framework for the implementation of a Bayesian model mimicking expert elicitation.
The underlying leitmotiv is to insert minimal a priori information in the failure risk model.
The relevancy of this concept is illustrated with theoretical examples.
Note that this article comes with supplementary material available on line.
\end{abstract}

{\bf Keywords}: Kriging, Bayesian inference, Gaussian processes mixture prior, multivariate t-distribution, uncertainty analysis, manufacturing yield evaluation

\section{Introduction}

\subsection{Settings and motivations}
During the fabrication of an industrial product, the manufacturing process cannot be entirely controlled.
There is an intrinsic variability (materials, machinery, environment ...) which can significantly deteriorate the characteristics of the manufactured pieces, inducing non-functional parts.
Such variations are critical when dealing with complex technical systems, such as for example integrated circuits developed in the microelectronic industry.
Indeed, if this fact is covered up, the resulting circuits are (in many cases) either consistently out of specifications or unnecessarily overdesigned.

To investigate the influence of fluctuating parameters, one can perform a corner analysis applying Design of Experiments principles.
However, the conclusions drawn from such measurements campaigns are, in general, qualitative and consequently incomplete.
Besides, time and money required by physical prototyping are often prohibitive.
As an alternative, numerical (deterministic) models implemented in engineering simulation software offer a way to compute the relevant features (thermal, mechanical, electrical, ...) of a device.
Thus, nowadays, engineers can virtually explore various design configurations and get a deep insight of the final product performances.
In this context, statistical studies can be conducted to evaluate the effect of process tolerances on the manufacturing yield.
The most popular is the Monte Carlo (MC) method, which consists in sampling configurations of the product parameters according to their probability distributions and to count the failure events.
The process is monitored "in line" to roughly assess distributions of environmental variables.
This method is easy to implement but its efficiency depends directly on the complexity of the deterministic model.
When the numerical simulator considered is time and/or memory demanding, we only get partial information.
For instance, the duration of "Finite Elements" analyses, from several hours to several days, is not compatible with a brute Monte Carlo approach.

Data are sparse and it is therefore necessary to propagate the information in the factor space using an analytical representation (emulator).
Monte Carlo sampling is then applied to this analytical representation, a fast surrogate of the computer code (simulator), see \cite{Pfingsten}.
In their article, outputs are considered as a single outcome of a Gaussian random field.
Our work focuses on a mixture of Gaussian fields, and this choice will be justified in the sequel.
Classical modeling approaches for failure risk (or yield) estimation only keep a small part of the model available information: mean, quantile...
Besides, after the model has passed the validation tests, these outputs are taken at face values.
We believe such a procedure is hazardous in the specific field of risk assessment.
Indeed, it does not measure the impact of the uncertainty introduced by any modeling stage on the only quantity of interest, namely the risk of failure.
For example, how can the decision-maker relate the model acceptance criteria to the accuracy of the failure risk estimate?
Answering this question in particular is difficult and probably fruitless.
Our work proposes a novel and general solution to address these issues.
Once the random model has been determined, and given the probability distribution of the product parameters, we go beyond Pfingsten's approach to define the failure risk probability.
The predictive uncertainty is not deduced from the posterior model, calculating for instance the conditional variance.
The failure risk itself is probabilistic and randomness mirrors the model uncertainty.

We consider that the product (meaning each individual manufactured piece) under study is characterized by a number $D$ of numerical factors.
Each factor can vary in a given interval what allows the definition of the factor space $ X \subseteq \R^D $. 
Each set of factors $x$ ($\in X$) determines a numerical value $ y \left( x \right) $, and the specifications imposed on the product apply to the value of this response $y$.
We can derive from these specifications the \textit{out of specifications space} $ A \subseteq \R $: the product characterized by the set of factors $x$ does not satisfy the specifications if and only if $ y \left( x \right) \in A $.
The factor space $X$ and the out of specifications space $A$ are considered as known, but regarding the deterministic function $ x \mapsto y \left( x \right) $, we have only very partial information.

Besides, a probability distribution $P$ is given on the factor space $X$.
This distribution reflects the factors variability and is considered as known.

As we said previously, the knowledge of $ y \left( x \right) $ is rarely available for all $x$.
We only have access to a restraint number of data.
Suppose we know $n$ deterministic response values $ \left( y_i = y \left( x_i \right) \right)_{1 \leq i \leq n} $ respectively for factor set values $ \left( x_i \right)_{ 1 \leq i \leq n } $.
With these data, our goal is to define a random variable named \textit{failure risk probability}.
Its distribution should help for the robust estimation of the product manufacturability.
 
Let us begin with the construction of a random field model $ \left( Y_x \right)_{x \in X} $ of the unknown response $ \left( y \left( x \right) \right)_{x \in X} $.

\subsection{Model construction}
Several methods are described in the literature to infer a model $ \left( Y_x \right)_{x \in X} $ from a limited number of available deterministic data
\[ Y_{x_1} = y_1, Y_{x_2} = y_2, \ldots, Y_{x_n} = y_n. \]
Linear regression analysis is the method of choice in the scientific community for estimating relationships among predictor variables.
When the phenomenon studied is complex though, the order of the model (usually polynomial), necessary to correctly fit the data, leads to numerous unknowns.
This drawback is magnified in a high-dimensional factor space.
Indeed, for a $p$-variable polynomial of degree $d$, $ \begin{pmatrix} p+d\\d\end{pmatrix} $ coefficients should be determined.
If the number of data available is less than this limit, the model is singular.
Thus, the player is required to make a "blind bet" to enter the game...
In addition, the assumptions justifying the statistical model are rarely strictly respected in practice.
For example, how should we understand random errors uncorrelated with zero mean when data are obtained through computer experiments, reproducible in essence?

These difficulties are overcome by the Gaussian Process (GP) model.
The original kriging method was formalized by \cite{Matheron} in the geo-sciences.
First, a GP indexed by $X$ is selected and then conditioned by the data.
This approach has been widely used for different applications such as, geostatistics studies (\cite{Berger}), optimization (\cite{Emmerich}), wind fields modeling (\cite{CornFord}) or design sensitivity analysis (\cite{Pfingsten}), justified by better predictive performances than several other regression methods (\cite{Rasmussen}). 
Indeed, GP modeling has several interesting properties.
Belonging to the interpolation methods family, this model exactly reproduces the observed data set, there is no residual at the observation points.
As a consequence, it is an appropriate tool for the analysis of computer experiments (\cite{Sacks}).
GP can be determined (in theory) even for data set of small size, a key property when information is missing which is a common situation when the number of factors ($D$) is large.
Moreover, it is a very versatile model, able to describe non-continuous as well as non-differentiable surface responses.
This noteworthy feature is particularly useful to handle response discontinuities, which may occur due to the numerical solving scheme of computer codes (especially when meshing algorithms are involved).
Finally, the probabilistic nature of the predictions can be interpreted as a model for uncertainty, a confidence interval representing a degree of belief.

A major and often neglected problem of this method is the a priori choice of parameters (mean, variance and correlation function), left to the responsibility of the data analyst.
In order to reduce the arbitrary nature of expert elicitation, we propose to replace the Gaussian field by a mixture of Gaussian fields, therefore avoiding the a priori choice of mean and variance.
For this purpose, with the aim of introducing minimal a priori information into the model, mean and variance are considered as random variables with uniform probability distributions.
We will show that, in doing so, any posterior random vector follows a multivariate t-distribution (or {\it multivariate Student distribution}).
We note that a similar result has been obtained with a hierarchical Bayesian model approach described, for example, in \cite{Santner}.

\subsection{Failure risk probability viewed as a random variable}
\label{Intro_FailureRisk}
The random field $ \left( Y_x \right)_{x \in X} $ represents the information as well as the uncertainty regarding the values of the response function $y$.
Once this field is built, we can compute for each factor point $ x \in X $, the \textit{failure probability} $ \P \left( Y_x \in A \right) $.

The reader sees here that we denote by $\P$ the probability measure associated to the random field (meaning that the random field is defined on an abstract probability space $ \left( \Omega, \P \right) $).
On the other hand, we recall that we denote by $P$ the probability measure assigned to the factor space $X$.

We decide that there is a \textit{failure risk} when the failure probability exceeds a fixed threshold $ \a \in \left[ 0, 1 \right] $.
So, \textit{the failure risk probability} (in short, \textit{risk probability}) is defined by, $ R \left( \a \right) := P \left( \P \left( Y_x \in A \right) > \a \right) $,
that is, $ R \left( \a \right) = P \left( \left\{ x \in X \mid \P \left( Y_x \in A \right) > \a \right\} \right)$. 
For a fixed $\a$, the distribution of the risk probability is a Dirac mass at the point $ R \left( \a \right) $.
This Dirac mass is denoted by $ \d_{R \left( \a \right) } $.

Setting the correct value for the accident threshold $\a$ is a tricky problem.
On the one hand, the decision will significantly impact the risk assessment of the product.
On the other hand, this choice is eminently subjective since it depends heavily on the risk attitude of the individual.
That is why we consider $\a$ as a random variable.
As a consequence, $ R \left( \a \right) $ is also a random variable and the risk probability has the distribution
\[ \Ri := \int_0^1 \d_{R \left( \a \right) } \text{ }\eta \left( \text{d} \a \right) \]
where $\eta$ is the probability distribution of $\a$.
We will see hereafter that the uniform distribution is a choice for $\eta$ which provides an interesting property of mean value preservation.

In practice, $ R \left( \a \right) $ cannot be computed analytically, it is approximated via a Monte Carlo simulation.
According to standard Bayesian methods for sampling study, $ R \left( \a \right) $ follows a beta distribution, denoted here by $ \beta_{\a} $.
Finally, the distribution of the risk probability is defined, if $\eta$ has been chosen uniform, by
\[ \Ri := \int_0^1 \beta_{\a} \text{ d} \a .\]
This reasoning will be described with more details in the sequel.

\subsection{Contents}
In Section~\ref{RandomModels}, several models are discussed. After a reminder about Gaussian fields (\ref{GaussianField}), we show that a random field prior with unknown mean (\ref{GaussianMixture}) and variance leads to a conditioned random field following a multivariate t-distribution (\ref{StudentField}), and discuss on the model implementation (\ref{ModelImplementation}).
In Section~\ref{Applications}, the construction of the density of the failure risk probability is described, beginning with the elicitation model (\ref{FailureRisk}).
We provide arguments in favor of a risk-neutral attitude, that is a uniform distribution for the accident threshold (\ref{UniformDistribution}).
We continue with practical considerations (\ref{MCapproach}) to conclude with the description of our global strategy (\ref{GlobalStrategy}).
Each section ends with illustrating examples.

\section{From Gaussian to Student fields}
\label{RandomModels}
The unknown function $ x \mapsto y \left( x \right) $ is modeled as a random function.
In the absolute sense, if we disregard the variability that is sometimes introduced by numerical solving schemes, the function is deterministic.
However, as the numerical solution to a hard problem (described, for instance, by partial differential equations), $y$ does not have in general a closed-form expression.
Consequently, the data analyst looking for a behavioral representation, can legitimately think of $y$ as a "black-box" function: it is unknown for any particular configuration until the computer code is actually run.
It hence makes sense to postulate a prior model for $y$, expressing our initial belief regarding $y$ features.
Bayesian updating then combines the evidences acquired, the data output $ \left( y_i = y \left( x_i \right) \right)_{1 \leq i \leq n} $ , and the prior distribution to yield the posterior distribution.

Contrary to what is often implicitly stated in the literature, we believe that it is far from obvious to get a relevant prior representation for $y$.
Standard priors reflects more of their mathematical tractability than a real understanding of the phenomenon under study.
Incorporating doubtful prior information into the model may yield erroneous and overly confident forecasts, a dangerous cocktail in risk assessment.
For that reason, we focus in the following on a prior based on quite few assumptions about how the model output relates to its inputs; it is deliberately weakly informative.

\subsection{Gaussian field}
\label{GaussianField}
In this first sub-section, we point out that the conditioning of a Gaussian field generates a new Gaussian field and we recall classical formulae giving the conditional densities.

A Gaussian field $ \left( Y_x \right)_{x \in X} $ is characterized by the fact that all finite dimensional marginal distributions are Gaussian and by the following data:
\begin{itemize}
\item mean values: $ \mu \left( x \right) := \E \left( Y_x \right) $
\item covariances: $ \rho \left( x,x' \right) := \textrm{cov} \left( Y_x, Y_{x'} \right) $.
\end{itemize}

\begin{theorem}
Let $ x_1, x_2, \ldots, x_n \in X $ and $ y_1, y_2, \ldots, y_n \in \R $.\\
The conditional distribution of the Gaussian field $ \left( Y_x \right)_{x\in X} $ given $ \left( Y_{x_i} = y_i \right)_{1\leq i \leq n} $ is still Gaussian with
\begin{itemize}
\item mean values: 
\[ \E \left( Y_x\mid \left( Y_{x_i} = y_i \right)_{1 \leq i \leq n} \right) = \mu \left( x \right) + \b {\rho \left( x,\left( x_i \right) \right)} \b \Sigma^{-1} \left( \b y - \b {\mu \left( \left( x_i \right) \right)} \right)^T \]
\item covariances: 
\[ {\rm{cov}} \left( Y_x, Y_{x'} \mid \left( Y_{x_i} = y_i \right)_{1 \leq i \leq n} \right) = \rho \left( x,x' \right) - \b {\rho \left( x,\left( x_i \right) \right)} \b \Sigma^{-1} \b{ \rho \left( x',\left( x_i \right) \right)}^T \]
\end{itemize}
where
$ \b{ \rho \left( x,\left( x_i \right) \right)} := \left( \rho \left( x, x_1 \right), \rho \left( x, x_2 \right), \ldots, \rho \left( x, x_n \right) \right) $,
$ \b \Sigma := \left( \rho \left( x_i, x_j \right) \right)_{1 \leq i, j \leq n} $ is a positive-definite matrix,
$ \b y := \left( y_1, y_2, \ldots, y_n \right) $ and 
$ \b {\mu \left( \left( x_i \right) \right)} := \left( \mu \left( x_1 \right), \mu \left( x_2 \right), \ldots, \mu \left( x_n \right) \right) $.
\end{theorem}

This is a classical result in Probability Theory.

A standard solution to our initial problem is to use this theorem with a given a priori Gaussian field $ \left( Y_x \right)_{x \in X} $ where neither the mean value $ \mu = \mu \left( x \right) $ nor the variance $ \s^2 = \rho \left( x, x \right) $ depend on $x$.
In this case, we will denote $ k \left( x, x' \right) = \rho \left( x, x' \right) / \s^2 $, the correlation coefficient between $Y_x$ and $Y_{x'}$.
If the covariance depends only on the difference between $x$ and $x'$, the Gaussian field is said stationary or homogeneous (\cite{Abrahamsen}).
Moreover, if the covariance depends only on the Euclidean distance between $x$ and $x'$, the Gaussian field is said isotropic.

In practice, the mean $\mu$, the variance $\s^2$ and the correlation function $k$ are usually a priori defined by the expert or estimated by means of calibration methods.
The most commonly used is the maximum likelihood method which will be discussed in Section~\ref{MLE}.

Note that here and in the sequel, we avoid particular choices of the correlation $k$, vectors $ x_1, x_2, \ldots, x_n $ and numbers $ y_1, y_2, \ldots, y_n $ which could lead to degenerate distributions.

\subsection{Gaussian mixture with random mean}
\label{GaussianMixture}
In this sub-section, the prior model is a mixture of Gaussian processes, which differ by translation.
This way, it is not required to set a priori the mean value.
The posterior distribution of this parameter is deduced from the data via Bayesian inference.
We will see that a (non informative) improper prior for the mean parameter can be defined as the limit of a (weakly informative) proper uniform distribution.
The limiting step is totally justified here since, on the one hand, it leads to proper posteriors and, on the other hand, inferences have a positive miscalibration (see \cite{Gelman} for details), that is an overestimate (on average) of the variance.
The conditioned random field is Gaussian and we give formulae for the mean and variance.
Doing so, we recognize expressions initially established by \cite{Sacks} and also derived by \cite{Santner}.

We propose to consider a prior random field $ Y_x = U + W_x $ where $U$ is a real random variable following a uniform distribution on an interval $ \left[ -m, m \right] $ and where $ \left( W_x \right)_{x \in X} $ is a centered Gaussian field with constant variance.
Moreover, we suppose that $U$ and $ \left( W_x \right)_{x\in X} $ are independent.

The parameters characterizing the Gaussian field $ \left( W_x \right)_{x \in X} $ are the variance $ \s^2$ and the correlation function $k$ (recall that the mean is zero).
So, for all $ x, x' \in X $, $ \textrm{cov} \left( W_x, W_{x'} \right) = \s^2 k \left( x, x' \right) $ and, for all $ x\in X $, $ k \left (x, x\right) = 1 $.

Let $ x_1, x_2, \ldots, x_n \in X $ and $ \b y := \left( y_1, y_2, \ldots, y_n \right) \in \R^n $.
Denote $ \b \Sigma := \left( k \left( x_i,x_j \right) \right)_{1 \leq i,j \leq n} $ the positive-definite matrix of correlations and $ \b {k \left( x \right)} := \left( k \left( x,x_j \right) \right)_{1 \leq j \leq n} $ the correlation vector.

\begin{theorem}
The conditional distribution of the random field $ \left( Y_x \right)_{x\in X} $ knowing that $ \left( Y_{x_i} = y_i \right)_{1\leq i \leq n} $ is given by explicit formulae for the densities of finite dimensional marginals.\\
When the parameter $m$ goes to infinity, this conditional distribution becomes Gaussian.
In particular, when $m \rightarrow \infty$, the univariate conditional distribution of the random variable $Y_x$ becomes Gaussian with mean
\begin{equation}
\mu + \b {k \left( x \right)} \b \Sigma^{-1} \left( \b y - \mu \1 \right)^T \quad \textrm{with} \quad \mu := \frac{ \b y \b \Sigma^{-1} \1^T}{\1 \b \Sigma^{-1} \1^T}
\label{MeanGauss}
\end{equation}
and variance
\begin{equation}
\s^2 \left( 1 - \b {k \left( x \right)} \b \Sigma^{-1} \b {k \left( x \right)}^T + \frac{ \left( 1 - \1 \b \Sigma^{-1} \b {k \left( x \right)}^T \right)^2}{ \1 \b \Sigma^{-1} \1^T} \right)
\end{equation}
where $ \1 = \left( 1,1, \ldots,1 \right) \in \R^n $.
\end{theorem}

\begin{remark}
The mean $\mu + \b {k \left( x \right)} \b \Sigma^{-1} \left( \b y - \mu \1 \right)^T$ can also be written
\[ \left( \frac{ \1 \b \Sigma^{-1}}{\1 \b \Sigma^{-1} \1^T} \left( 1 - \b {k \left( x \right)} \b \Sigma^{-1} \1^T \right) + \b {k \left( x \right)} \b \Sigma^{-1} \right) \b y^T .\]
Note that an expression similar to (\ref{MeanGauss}) is obtained in Section~\ref{ComparisonStudentGauss}.
\end{remark}

\begin{proof}[Proof of Theorem 2]
We look for the distribution of the random field $ \left( Y_x \right)_{x\in X} $ given $ \left( Y_{x_i} = y_i \right)_{1 \leq i \leq n} $.\\
Let $r$ be a positive integer and $ \left( t_1,\ldots, t_r \right) \in X^r$.
We have:
\[ \left( Y_{t_1},\ldots,Y_{t_r},Y_{x_1},\ldots, Y_{x_n} \right) = \left( U,\ldots, U \right) + \left( W_{t_1},\ldots, W_{t_r}, W_{x_1},\ldots, W_{x_n} \right) \]
and $ \left( W_{t_1}, \ldots, W_{t_r}, W_{x_1}, \ldots, W_{x_n} \right)$ follows the distribution $ \mathcal{N} \left( 0, \b \Delta \right) $ with
\[ \b \Delta := \s^2 \begin{pmatrix} \b {\Sigma_2} & \b {k \left( t \right)} \\ \b {k \left( t \right)}^T & \b \Sigma \end{pmatrix} \textrm{, a positive-definite matrix} \]
where $ \b {k \left( t \right)} := \left( k \left( t_i,x_j \right) \right)_{1 \leq i \leq r,1 \leq j \leq n} $ and $ \b {\Sigma_2} := \left( k \left( t_i,t_j \right) \right)_{1 \leq i \leq r,1 \leq j \leq r} $.\\
Denote by $f$ the density of the random vector $ \left( Y_{t_1}, \ldots, Y_{t_r}, Y_{x_1}, \ldots, Y_{x_n} \right)$:
\[ f \left( \b \zeta \right) = \frac1{2m} \int_{-m}^m \frac1{\left(\sqrt{2\pi} \right)^{n+r} \sqrt{\left| \b \Delta\right|}} \exp \left( - \frac12 \left( \b \zeta - \b u \right) \b \Delta^{-1} \left( \b \zeta - \b u \right)^T \right) \textrm{ d}u \]
where $ \b \zeta := \left( y_{t_1}, y_{t_2}, \ldots, y_{t_r}, y_{x_1}, y_{x_2}, \ldots, y_{x_n} \right) $ and $ \b u := \left( u,u, \ldots, u \right) \in \R^{n+r} $.

The conditional density of $ \left( Y_{t_1}, Y_{t_2}, \ldots, Y_{t_r} \right) $ given $ \left( Y_{x_i} = y_{x_i} \right)_{1 \leq i \leq n} $ is
\[ \b z := \left( y_{t_1}, y_{t_2}, \ldots, y_{t_r} \right) \mapsto \frac{f \left( \b z, \b y \right)}{\int_{\R^r} f \left( \b {z'}, \b y \right) \textrm{ d} \b {z'}} =: g \left( \b z \right) .\]
After simplification, we get:
\[ g \left( \b z \right) = \frac{ \int_{-m}^m \exp \left( - \frac12 \left( \begin{pmatrix} \b z & \b y \end{pmatrix} - \b u \right) \b \Delta^{-1} \left( \begin{pmatrix} \b z & \b y \end{pmatrix} - \b u \right)^T \right) \textrm{ d}u }{\int_{\R^r} \int_{-m}^m \exp \left( - \frac12 \left( \begin{pmatrix} \b {z'} & \b y \end{pmatrix} - \b u \right) \b \Delta^{-1} \left( \begin{pmatrix} \b {z'} & \b y \end{pmatrix} - \b u \right)^T \right) \textrm{ d} \b {z'} \textrm{ d}u } .\]
By monotone convergence, some calculations show that:
\[ g \left( \b z \right) \underset{m\to+\infty}{\longrightarrow} \frac{\exp \left( - \frac12 \left( \begin{pmatrix} \b z & \b y \end{pmatrix} \b \Delta^{-1} \begin{pmatrix} \b z & \b y \end{pmatrix}^T - \frac{\left(\1 \b \Delta^{-1} \begin{pmatrix} \b z & \b y \end{pmatrix}^T \right)^2}{ \1 \b \Delta^{-1} \1^T} \right) \right) }{\int_{\R^r} \exp \left( - \frac12 \left( \begin{pmatrix} \b {z'} & \b y \end{pmatrix} \b \Delta^{-1} \begin{pmatrix} \b {z'} & \b y \end{pmatrix}^T - \frac{ \left( \1 \b \Delta^{-1} \begin{pmatrix} \b {z'} & \b y \end{pmatrix}^T \right)^2}{ \1 \b \Delta^{-1} \1^T} \right) \right) \textrm{ d} \b {z'} } .\]
Within the exponential in the numerator of this expression we identify a non-negative second degree polynomial of the variable $ \b z$.
We recognize a Gaussian distribution (of dimension $r$).\\
In the limit situation $m\rightarrow\infty$, we note that the distribution of the conditioned random field is well-defined and is Gaussian.
Some calculations show that, at the point $x \in X$, the one-dimensional marginal Gaussian distribution of the field has mean
\[ \mu + \b {k \left( x \right)} \b \Sigma^{-1} \left( \b y - \mu \1 \right)^T  \quad \textrm{with} \quad \mu := \frac{ \b y \b \Sigma^{-1} \1^T}{\1 \b \Sigma^{-1} \1^T} \]
and variance
\[ \s^2 \left( 1 - \b {k \left( x \right)} \b \Sigma^{-1} \b {k \left( x \right)}^T + \frac{ \left( 1 - \1 \b \Sigma^{-1} \b {k \left( x \right)}^T \right)^2}{\1 \b \Sigma^{-1} \1^T} \right) .\]
\end{proof}

\subsection{Gaussian mixture with random mean and variance}
\label{StudentField}
We go one step further, avoiding the choice of the mean and variance.
The prior model is a mixture of Gaussian processes, which differ by affine transformation.
Once again, the conditioned random field is well-defined selecting (weakly informative) proper uniform priors for the unknown mean and standard deviation.
When the distributions supports respectively tend to the entire and positive real lines, the multivariate Student distribution arises as the random field posterior distribution.
We give explicit formulae for the location and scale parameters.
\cite{Santner} and \cite{Kato} describe similar models with different priors.
For example, with Jeffreys priors for the mean and standard deviation (scale-invariance property), Santner and al. come to the same posterior except for the number of degrees of freedom.
We justify our choice arguing that a positive miscalibration is always preferred by the risk-averse individual.

\subsubsection{Multivariate Student distribution}
First, recall the definition of a multivariate Student distribution.
We refer to the book of \cite{Kotz}.
A $p$-variate Student distribution (or a multivariate t-distribution) has density:
\[ \b t \mapsto \frac{1}{ \left(\sqrt{\pi\nu}\right)^p \sqrt{\left| \b \Sigma \right|}} \frac{\Gamma \left( \frac{\nu+p}2 \right) }{\Gamma \left( \frac{\nu}2 \right)} \left( 1 + \frac1{\nu} \left( \b t - \b \mu \right) \b \Sigma^{-1} \left( \b t - \b \mu \right)^T \right)^{-\frac{\nu+p}2} \quad \left( \b t \in \R^p \right) \]
where the positive integer $\nu$ is the number of degrees of freedom, $ \b \Sigma$ is the $ p \times p $ positive-definite matrix of scale parameters and $ \b \mu$ is the $ 1 \times p $ vector of location parameters.

For $ \nu > 1 $, the mean vector of the Student distribution is well-defined and equals to $ \b \mu$.
For $ \nu > 2 $, the covariance matrix of the Student distribution is well-defined and equals to $ \frac{\nu}{\nu-2} \b \Sigma $.

This is a multi-dimensional generalization of the Student distribution.
When $ \nu = 1 $, the distribution is a multivariate Cauchy distribution.
When $ \nu $ goes to infinity, the distribution tends to a multivariate Gaussian distribution.

\subsubsection{Student field posterior}
\label{StudentFieldConstruction}
Now, we propose to consider a prior random field $ Y_x = U + V W_x $, where $U$ is a real random variable following a uniform distribution on an interval $ \left[ -m,m \right] $, where $V$ is a real and positive random variable following a uniform distribution on an interval $ \left[ \e, 1/\e \right] $ and where $ \left( W_x \right)_{x \in X} $ is a centered normalized Gaussian field.
Moreover, we suppose that $U$, $V$ and $ \left( W_x \right)_{x\in X} $ are independent.

The parameter characterizing the Gaussian field $ \left( W_x \right)_{x \in X} $ is the correlation function $k$ (recall that the mean is zero and the variance is 1).
We suppose here that $n \geq 3$.
Let $ x_1, x_2, \ldots, x_n \in X $ and $ \b y := \left(y_1,y_2,\ldots,y_n\right) \in \R^n$.
Denote $ \b \Sigma := \left( k \left( x_i,x_j \right) \right)_{1 \leq i,j \leq n} $ the positive-definite matrix of correlations and $ \b {k \left( x \right)} := \left( k \left( x,x_j \right) \right)_{1 \leq j \leq n} $ the correlation vector.

\begin{theorem}
The conditional distribution of the random field $ \left( Y_x \right)_{x\in X} $ knowing that $ \left( Y_{x_i} = y_i \right)_{1\leq i \leq n} $ is given by explicit formulae of densities of finite dimensional marginals.\\
When the parameter $m$ goes to infinity and $\e$ goes to zero, for $ n > 2 $, this conditional distribution becomes a multivariate Student distribution.\\
In particular, when $ m \rightarrow \infty $, $ \e \rightarrow 0 $ and $ n > 2 $, the univariate conditional distribution of the random variable $Y_x$ becomes a Student distribution with $n-2$ degrees of freedom, with location parameter
\[ \mu + \b {k \left( x \right)} \b \Sigma^{-1} \left( \b y - \mu \1 \right)^T \quad \textrm{with} \quad \mu := \frac{ \b y \b \Sigma^{-1} \1^T}{\1 \b \Sigma^{-1} \1^T} \]
and scale parameter
\[ \sqrt{\frac1{n-2} \left( \left( \b y - \mu \1 \right) \b \Sigma^{-1} \b y^T \right) \left( 1 - \b {k \left( x \right)} \b \Sigma^{-1} \b {k \left( x \right)}^T + \frac{ \left( 1 - \1 \b \Sigma^{-1} \b {k \left( x \right)}^T \right)^2}{\1 \b \Sigma^{-1} \1^T} \right) } \]
where $ \1 = \left( 1,1, \ldots,1 \right) \in \R^n $.
\end{theorem}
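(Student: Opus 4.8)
\emph{Proof proposal.} The plan is to condition first on the latent variables $U$ and $V$ — which turns $(Y_x)_{x\in X}$ into a Gaussian field and reduces everything to classical kriging — and then to average the result against the posterior law of $(U,V)$ given the observations, so that the Student law appears as a scale mixture of Gaussians. Fix $u\in[-m,m]$ and $v\in[\e,1/\e]$. Given $U=u$, $V=v$, the field $(Y_x)$ is Gaussian with mean $u$ and covariance $v^2k(\cdot,\cdot)$, so for an auxiliary point $x$ the vector $(Y_x,Y_{x_1},\dots,Y_{x_n})$ is Gaussian with mean $u\1$ and covariance $v^2$ times the correlation matrix with blocks $1$, $\b{k(x)}$, $\b\Sigma$. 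I would then invoke the classical Gaussian conditioning formula: given $U=u$, $V=v$ and $(Y_{x_i}=y_i)_i$, the variable $Y_x$ is Gaussian with mean $u+\b{k(x)}\b\Sigma^{-1}(\b y-u\1)^T$ and variance $v^2\bigl(1-\b{k(x)}\b\Sigma^{-1}\b{k(x)}^T\bigr)$, and similarly for any finite family of auxiliary points with the corresponding Schur-complement covariance.

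Next, by Bayes' formula the conditional density of $(U,V)$ given $(Y_{x_i}=y_i)_i$ is proportional, on the rectangle $[-m,m]\times[\e,1/\e]$, to $v^{-n}\exp\bigl(-\tfrac1{2v^2}(\b y-u\1)\b\Sigma^{-1}(\b y-u\1)^T\bigr)$. Expanding this quadratic form in $u$ and completing the square produces exactly $\mu=\b y\b\Sigma^{-1}\1^T/(\1\b\Sigma^{-1}\1^T)$ together with the residual $S:=(\b y-\mu\1)\b\Sigma^{-1}\b y^T=\b y\b\Sigma^{-1}\b y^T-\mu^2\,\1\b\Sigma^{-1}\1^T$, which is $\geq 0$ by the Cauchy--Schwarz inequality for the inner product defined by $\b\Sigma^{-1}$ (positive definite since $\b\Sigma$ is an invertible correlation matrix) and is $>0$ in generic position, i.e. when $\b y$ is not proportional to $\1$. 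Mixing the Gaussian law of the previous step against this posterior and integrating first in $u$ (a Gaussian integral) and then in $v$ (after the substitution $w=1/v^2$, an incomplete-Gamma integral over $[\e^2,1/\e^2]$) yields the announced explicit density of $Y_x$, and of every finite-dimensional marginal, for fixed finite $m$ and positive $\e$.

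For the limit, as $m\to\infty$ and $\e\to0$ the truncated priors become flat on $\R\times(0,\infty)$, and I would show by dominated convergence that the conditional densities converge to those obtained formally with the flat prior. Integrating out $u$ first shows that, under the flat prior, $1/V^2$ conditioned on the data is Gamma-distributed with shape $(n-2)/2$ and rate $S/2$ — in particular the limiting posterior is a genuine probability measure precisely when $n>2$, which is exactly where the hypothesis enters — while $U$ is conditionally Gaussian with mean $\mu$ and variance $V^2/(\1\b\Sigma^{-1}\1^T)$. A scale mixture of Gaussians by such a Gamma is a Student law with $n-2$ degrees of freedom: concretely, carrying out the two integrations in the limit makes the density of $Y_x$ proportional to $\bigl(S+\text{(quadratic in }t)\bigr)^{-(n-1)/2}=\bigl(1+\tfrac{(t-\ell)^2}{(n-2)s^2}\bigr)^{-(n-1)/2}$. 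Finally I would read off $\ell=\mu+\b{k(x)}\b\Sigma^{-1}(\b y-\mu\1)^T$ from the linear term and obtain the stated $s^2$ by collecting the quadratic coefficients (one more Schur-complement manipulation involving $1-\b{k(x)}\b\Sigma^{-1}\b{k(x)}^T$ and $1-\1\b\Sigma^{-1}\b{k(x)}^T$). The multivariate assertion follows from the same computation carried out simultaneously for $(Y_{x^{(1)}},\dots,Y_{x^{(p)}})$: the $p$-dimensional Gaussian, mixed by the very same Gamma variable, yields a density proportional to a $-(n+p-2)/2$ power of $1+$(a $p$-dimensional quadratic form), i.e. the multivariate Student distribution with $n-2$ degrees of freedom.

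The main obstacle I anticipate is the passage to the improper-prior limit: one must justify that the ratio of the truncated integrals over $[-m,m]\times[\e,1/\e]$ converges to the ratio of the corresponding integrals over $\R\times(0,\infty)$, which hinges on the integrability of $v^{-n}e^{-c/v^2}$ near $v=\infty$ after integrating out $u$ — hence on $n>2$ and on $S>0$. The remaining work is careful but routine: completing squares, Schur complements, and standard Gaussian and Gamma integrals.
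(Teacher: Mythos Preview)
Your argument is correct and reaches the same conclusions, but it is organized differently from the paper's proof. The paper writes down the full joint density of $(Y_{t_1},\dots,Y_{t_r},Y_{x_1},\dots,Y_{x_n})$ as a double integral over $(u,v)$, forms the conditional density as a ratio, and then evaluates the $u$- and $v$-integrals in the limit $m\to\infty$, $\e\to0$ directly (via monotone convergence and a Gamma substitution), obtaining a density proportional to $(c-b^2/a)^{-(n+r-2)/2}$; two auxiliary lemmas then establish integrability and identify this form as a multivariate Student law, and the univariate parameters are extracted by explicitly inverting the augmented correlation matrix $\b\Delta$. You instead exploit the hierarchical structure: condition on $(U,V)$ to reduce to classical Gaussian kriging, compute the posterior of $(U,V)$ by Bayes, and recognise the limiting posterior of $1/V^2$ as $\mathrm{Gamma}((n-2)/2,S/2)$, so that the Student law emerges as the familiar Gaussian--inverse-Gamma scale mixture. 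The underlying integrals are the same --- a Gaussian integral in $u$ followed by a Gamma integral in $v$ --- so the two proofs are computationally equivalent; your route is more probabilistic and makes the role of $n>2$ and of the residual $S>0$ transparent from the start, while the paper's route is more analytic and self-contained (it does not invoke kriging or the scale-mixture characterisation of the $t$ distribution). One small remark: after integrating out $u$ the surviving power of $v$ in the denominator is $v^{-(n-1)}$ rather than $v^{-n}$, but your conclusion that integrability at $v\to\infty$ requires $n>2$ is of course unaffected; and monotone convergence (integrands are nonnegative, domains increase) is the natural tool for the passage to the limit rather than dominated convergence.
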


The proof of Theorem 3 results from the same reasoning than the proof of Theorem 2 with more complex formal calculations.
It is developed in the supplementary materials available on line.

\begin{remark}
The expression $ \left( \b y - \mu \1 \right) \b \Sigma^{-1} \b y^T $ can also be written $ \left( \b y - \mu \1 \right) \b \Sigma^{-1} \left( \b y - \mu \1 \right)^T $.
Note that a similar expression is obtained in Section~\ref{ComparisonStudentGauss}.
\end{remark}

\subsection{Model implementation}
\label{ModelImplementation}

\subsubsection{Discussion on correlation functions}
The choice a priori of the correlation function is one of the major difficulties in random field modeling (\cite{Sacks}) and the discussion is unfortunately often avoided.
It is sometimes justified by expert knowledge (\cite{CornFord}) in particular applied cases.
\cite{Rasmussen} give a complete list of correlation functions, classified according to practical considerations.
The most popular correlation families are isotropic: Exponential, Matern and Rational Quadratic classes (\cite{Abrahamsen}).
Indeed, an isotropic random field does not suffer the curse of dimensionality since the number of parameters in the model does not depend on the factor space dimension ($D$).

Determining appropriate values for the parameters of the correlation function is the purpose of calibration methods.
The most popular of them is the maximum likelihood estimation (MLE) described in the next sub-section.
\cite{Rasmussen}, \cite{Stein} and \cite{Robert} describe this method and discuss about its capabilities and limitations such as numerical issues for too big samples, multiple optima for too small samples and over-fitting problems for which the sample is well learned but the unknown function values are poorly predicted everywhere else in the factor space.

In order to overtake these different limitations, a good way seems to try different choices of correlation function; each alternative is then evaluated by checking if known values of the responses are compatible with the confidence intervals predicted by the random field.
The literature describes several types of cross-validation methods measuring the predictive capability of a model (\cite{Currin}).
It is of course necessary to use a set of data which is not involved in the construction of the a posteriori model.
Because we cannot, when information is scarce, afford not to include all the gathered data in the final model, the partition into training and validation sets is only temporary, for the purpose of the cross-validation stage.
In that case, the test is therefore performed on an "incomplete" posterior model version.

\cite{Rasmussen} describe an alternative method which consists in choosing a parameterized family of correlation function and a prior distribution on its parameters, to construct a hierarchical model.
This method involves analytical approximations of integrals.
Markov chain Monte Carlo (MCMC) methods are popular solutions to make these computations, see \cite{Robert} for detailed description of these methods.
A major drawback is here the cost of such calculations.

In fact, there is no complete and rigorous study for the identification of the correlation function giving the best results in a particular situation.
However, \cite{Abrahamsen} lists some geometrical properties (continuity and differentiability) of the correlation functions which can guide the expert.

\subsubsection{Maximum Likelihood Estimation}
\label{MLE}
The classical maximum likelihood estimation (MLE) method defines an estimator of the unknown parameter vector $ \b \theta $ of a probability distribution $ f_{\b \theta} $.
This estimator is the value $ \b {\theta_{\max}} $ which maximizes the density distribution of a random sample calculated at the observed value of this sample.

Under the same name, this method has been adapted to the identification of the parameters of an unknown random field $ \left( Y_x \right)_{x\in X} $ when considering a family of values $ Y_{x_1}, Y_{x_2}, \ldots, Y_{x_n} $.
It is important to have in mind that the observation set is a single outcome of the random field so that the validity of the MLE method, in this context, is not obvious.
However, this topic is outside the scope of this article.

Considering now the probabilistic modeling of a deterministic phenomenon, the MLE provides a practical procedure to set the parameters of the a priori random field $ \left( Y_x \right)_{x\in X} $ knowing $ Y_{x_i} = y_i $, $ 1 \leq i \leq n $.
This is a classical approach in Gaussian field modeling, see \cite{Rasmussen}, \cite{Stein} and \cite{Robert}.

Even if our justifications are incomplete, let us describe the MLE for the random model described in Section~\ref{StudentField}.

First, set a parametric family of correlation functions $ k_{ \b \theta} $ depending on the parameter vector $\b \theta$, which defines also the parametric correlation matrix $ \b {\Sigma_\theta } = \left( k_{ \b \theta} \left( x_i, x_j \right) \right)_{1\leq i,j\leq n} $.
The MLE consists in choosing $\b \theta$ which maximizes the density of the random vector $ \left( Y_{x_1}, Y_{x_2}, \ldots, Y_{x_n} \right) $ according to $m$ and $\e$:
\[ f_{m, \e} \left( \b \theta \right) = \frac1{2m} \frac1{\frac1 \e - \e} \int_{-m}^m \int_\e^{\frac1 \e} \frac1{\left(\sqrt{2\pi} \right)^n v^n \sqrt{\left| \b {\Sigma_\theta }\right|}} \exp \left( - \frac1{2v^2} \left( \b y - \b u \right) \b {\Sigma_\theta }^{-1} \left( \b y - \b u \right)^T \right) \textrm{ d}u \textrm{ d}v \]
with $ \b y := \left( y_{x_1}, y_{x_2}, \ldots, y_{x_n} \right) $ and $ \b u := \left( u,u, \ldots, u \right) \in \R^n $.

The maximum likelihood estimator $ \b { \theta_{\max} \left( m, \e \right)} $ maximizes $ f_{m, \e} \left( \b \theta \right) $.
It maximizes also:
\[ \tilde f_{m, \e} \left( \b \theta \right) = \int_{-m}^m \int_\e^{\frac1 \e} \frac1{v^n \sqrt{\left| \b {\Sigma_\theta }\right|}} \exp \left( - \frac1{2v^2} \left( \b y - \b u \right) \b {\Sigma_\theta }^{-1} \left( \b y - \b u \right)^T \right) \textrm{ d}u \textrm{ d}v .\]
We don't know any analytic expression of $ \tilde f_{m, \e} \left( \b \theta \right) $ and a fortiori of $ \b { \theta_{\max} \left( m, \e \right)} $ but since we are interested in large values of $m$ and little values of $\e$, we consider the limit value $\tilde f_{\infty, 0} \left( \b \theta \right)$.
So we propose to study:
\[ \tilde f_{\infty, 0} \left( \b \theta \right) = \int_{-\infty}^{+\infty} \int_0^{+\infty} \frac1{v^n \sqrt{ \left| \b {\Sigma_\theta} \right|}} \exp \left( - \frac1{2v^2} \left( \b y- \b u \right) \b {\Sigma_\theta }^{-1} \left( \b y - \b u \right)^T \right) \textrm{ d}u \textrm{ d}v .\]
A short calculation gives 
\[ \tilde f_{\infty,0} \left( \b \theta \right) = \frac{ \sqrt{\pi} 2^{\frac{n-3}2} \left(n \s^2_{ \b \theta}\right)^{\frac{2-n}2} \Gamma \left( \frac{n-2}2 \right)}{\sqrt{ \left| \b {\Sigma_\theta} \right|} \left( \1 \b {\Sigma_\theta }^{-1} \1^T \right)} \]
with
\[ \s^2_{ \b \theta} := \frac1n \left( \b y - \mu_{ \b \theta} \1 \right) \b {\Sigma_\theta }^{-1} \left( \b y - \mu_{ \b \theta} \1 \right)^T \quad \textrm{and} \quad \mu_{ \b \theta} := \frac{\1 \b {\Sigma_\theta }^{-1} \b y^T}{\1 \b {\Sigma_\theta }^{-1} \1^T} .\]
Maximizing $ \tilde f_{\infty,0} \left( \b \theta \right) $ is equivalent to minimize $ - \ln \left( \tilde f_{\infty,0} \left( \b \theta \right) \right)$ and after simplifications, we get:
\[ \b { \theta_{\max} \left( \infty, 0 \right)} \quad \textrm{minimizes} \quad n \ln \left( \s^2_{ \b \theta} \right) + \ln \left( \left| \b {\Sigma_\theta} \right| \right) + 2 \ln \left( \frac {\1 \b {\Sigma_\theta }^{-1} \1^T}{ \s^2_{ \b \theta}} \right) .\]
Observe the difference with the Gaussian case in which the MLE method proposes analytical expressions for the mean and the variance of the field and where the parameter vector $\b \theta$ of the correlation function is estimated by minimizing:
\[ n \ln \left( \s^2_{ \b \theta} \right) + \ln \left( \left| \b {\Sigma_\theta} \right| \right) .\]

\subsection{Examples and results}

\subsubsection{Comparison between Student and Gaussian fields}
\label{ComparisonStudentGauss}
Let us come back for a while to the Gaussian field model described in Section~\ref{GaussianField}, choosing constant mean and variance.
If the mean $\mu$ and variance $\s^2$ are determined by the MLE method considering the data set $ \left( y \left( x_i \right) = y_i \right)_{1 \leq i \leq n} $, then the following formulae are obtained:
\[ \mu = \frac{ \b y \b \Sigma^{-1} \1^T}{\1 \b \Sigma^{-1} \1^T} \quad \textrm{and} \quad \s^2 = \frac1{n} \left( \b y - \mu \1 \right) \b \Sigma^{-1} \left( \b y - \mu \1 \right)^T .\]
This is a classical result in this area.
See for example \cite{Currin} or \cite{Stein}.

Following Theorem~1, the a posteriori mean and variance of the Gaussian random variable $Y_x$ are respectively
\[ \mu + \b {k \left( x \right)} \b \Sigma^{-1} \left( \b y - \mu \1 \right)^T \quad \textrm{and} \quad \s^2 \left( 1 - \b {k \left( x \right)} \b \Sigma^{-1} \b {k \left( x \right)}^T \right) .\]
It is interesting to compare these values with those given in Theorem 3.
Note that, for the same correlation function, the mean remains the same and the variance is greater.
We can illustrate this difference on a simple example in one dimension.

Let $ X = \left[ -5,5 \right] $.
Let a sample of 5 points $ \left( x_1 = -4, x_2 = -3, x_3 = -1, x_4 = 0, x_5 = 2 \right) $ with the associated response values $ \left( y_1 = -2, y_2 = 0, y_3 = 1, y_4 = 2, y_5 = -1 \right) $.
We build for an arbitrary fixed correlation function, $ \rho \left( x,x' \right) := \exp \left( - 100 \left| x - x' \right|^2 \right) $, a Gaussian field and a Student field given the knowledge of the previous sample.

\begin{figure}[!h]
\begin{center}
\includegraphics[width=8.9cm,height=6cm,angle=0]{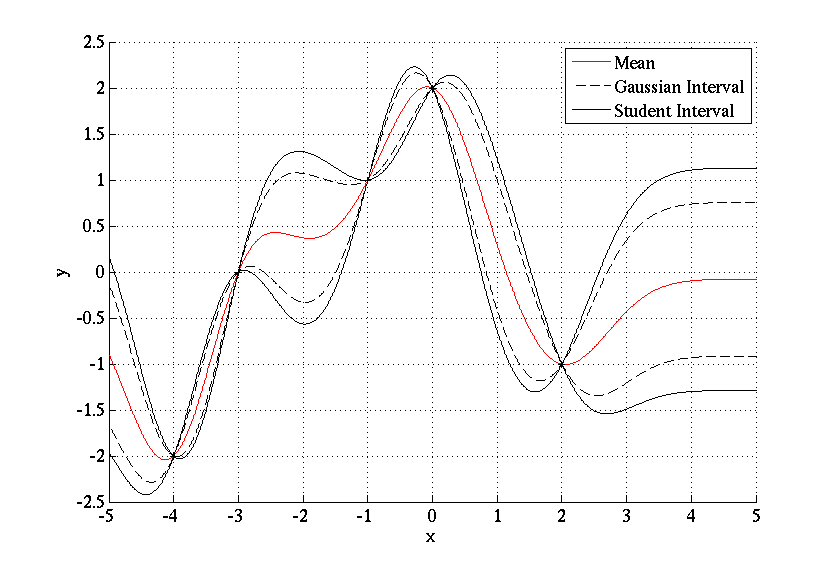}
\end{center}
\caption{Univariate illustration. Gaussian and Student field posteriors derived from five evaluations of the unknown function $ y \left( x \right) $.}
\label{figureGaussianStudent}
\end{figure}

Figure~\ref{figureGaussianStudent} shows, for each field after conditioning, the mean and the boundaries of an interval computed with a confidence of 0.90.
As expected, the confidence intervals are both null at the 5 sampled points since the distributions at these points degenerate in Dirac distributions.
The confidence interval of the Student posterior is larger than the Gaussian posterior equivalent, suggesting a lower degree of belief in the prediction.
The Student field seems more reliable, since we take into account all the possible values of the mean $\mu$ and the variance $ \s^2$, that were arbitrarily fixed in the case of the Gaussian field.

\subsubsection{Prediction error study}
\label{ErrorStudy}
In order to highlight the predictive capabilities of the Student field, we use an analytical benchmark function~$y$, the 5-dimensional quadric example described in Appendix~\ref{quadric_example}, to study the evolution of the model prediction error as the number of observed points increases.

The Student posterior field $ \left( Y_x \right)_{x\in X} $ is derived from a set of random samples $ \left( x_i, y_i = y \left( x_i \right) \right)_{1 \leq i \leq n} $ according to the results of the Section~\ref{StudentFieldConstruction}.
We selected an anisotropic $\g$-exponential correlation function (refer to Appendix~\ref{correlationfunction}) whose free parameters were set applying the MLE method, presented in Section~\ref{MLE}.

The following quantities are considered as quality criteria.
For any $ t \in X $, we define the relative error to be 
\[ \epsilon_r \left( t \right) := \left| \frac{\E \left( Y_t \right) - y \left( t \right)}{y \left( t \right)} \right| .\]
We call mean relative error, the integral over $X$ of $\epsilon_r$ and we call ccdf, the complementary cumulative density function of $\epsilon_r$, that is
\[ \alpha \mapsto P \left( \left\{ t \in X \mid \epsilon_r \left( t \right) > \alpha \right\} \right) .\]
These statistics have been calculated for an increasing number of data, $n$, and the results are plotted on Figure~\ref{ErrorQuadric}.
As expected, the relative error falls.
Considering a sample of 10 data, $\epsilon_r$ is less than $ 42\% $ with a probability of 0.9 (poor model).
This value drops to $ 1.9\% $ with 100 data and to $ 0.21\% $ with 1000 data.

\begin{figure}[!h]
\includegraphics[width=3.2in,height=2.49in,angle=0]{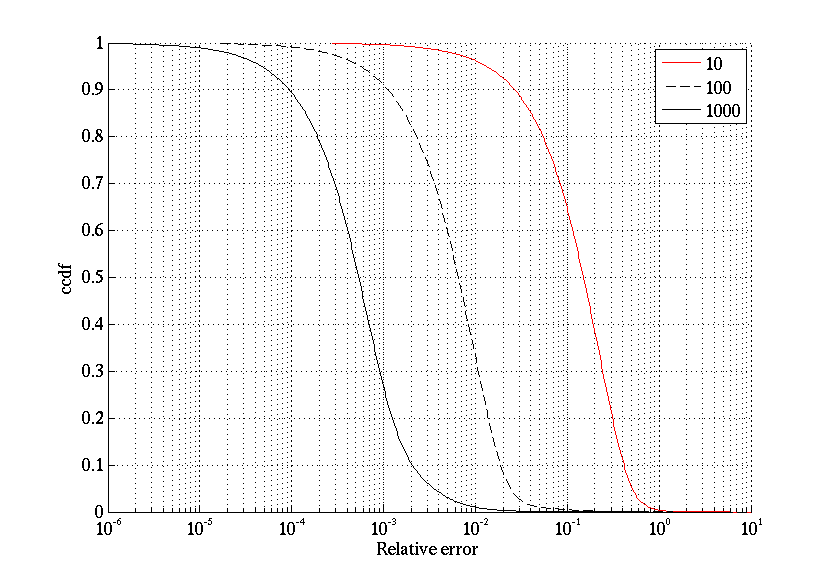} \hfill
\includegraphics[width=3.2in,height=2.49in,angle=0]{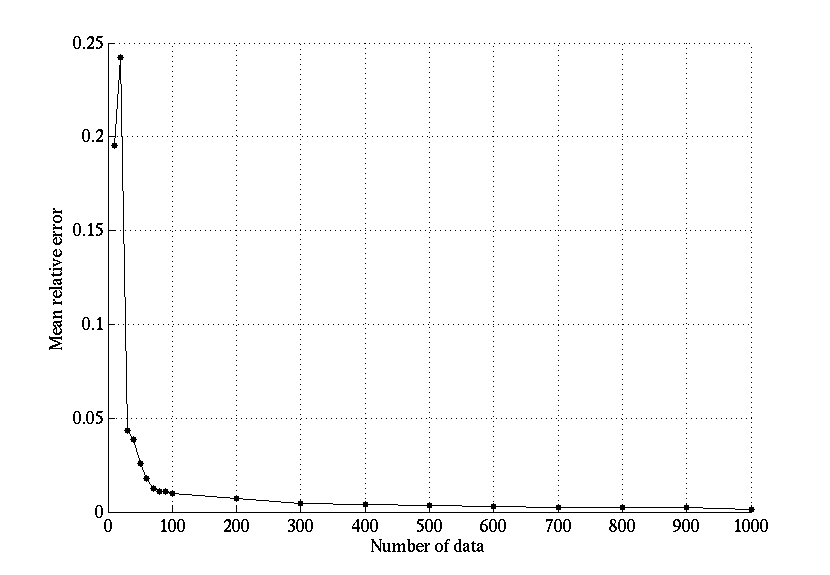}
\caption{Convergence rates for a Student field approximating the quadric benchmark function.}
\label{ErrorQuadric}
\end{figure}

\section{Random fields application}
\label{Applications}

\subsection{Failure risk probability}
\label{FailureRisk}
The \textit{real failure risk} is the measure of the \textit{real failure set} $ \left\{ x \in X \mid y \left( x \right) \in A \right\} $
where $ X \subseteq \R^D $ is the factor space provided with a probability distribution $P$.
In a lot of practical cases, the function $ x \mapsto y \left( x \right) $ is not available though; it can only be sampled at a few $x$ locations.
According to Section~\ref{RandomModels}, a random field $ \left( Y_x \right)_{x \in X} $ defined on a probability space $ \left( \Omega, \P \right) $ is derived from a set of observations in order to approximate the function $y$.

This modeling stage is not without consequences on the approach retained to calculate the failure risk.
Since the knowledge on the function $y$ is low, it seems reasonable to provide the failure risk prediction with a confidence measure.
Thus a decision-maker could easily evaluate the model quality and rule on the manufactured product robustness more safely.
This problem can be formulated as follows: how can we propagate the uncertainty inherent to any prediction based on a random field to the failure risk estimation?

If the phenomenon under study was a real random field $ \left( Y_x \right)_{x\in X} $, we could consider the random variable
\[ \omega \mapsto P \left( \left\{ x \in X \mid Y_x \left( \omega \right) \in A \right\} \right) \quad \textrm{on } \left( \Omega, \P \right) \]
as the \textit{failure risk} and its distribution as the \textit{failure risk probability}.

Let us explain why it is not the good point of view in the case where the phenomenon under study is deterministic.
Strictly speaking, the model $ \left( Y_x \right)_{x \in X} $ cannot describe the reality, which is perfectly determined (but unknown).
It correctly represents our knowledge at the observation points.
Besides, we have to give an interpretation to the process randomness.
If we retain the subjective conception of probability theory, randomness results from incomplete knowledge of the quantity $ y \left( x \right) $.
Within this framework, the measure~$\P$ quantifies a degree of belief regarding the model forecasts.
Randomness becomes a way to assess the quality, i.e. the predictive capability of the model.
The expression $ \P \left( Y_x \in A \right) = 0.5 $ does not mean that the point $x$ has a one-in-two chance of belonging to the real failure set; it means that, regarding the available information, one decision-maker out of two will consider (believe) that $x$ belong to the real failure set.
The model is therefore inadequate at point $x$.

If the previous definition of the failure risk is absolutely correct mathematically, it cannot be used, in this particular case, to model incomplete knowledge.
An example is necessary here to clarify our point.
We claim that the condition $\P \left(Y_x\in A\right)=1/2$ for any $ x\in X $, which is the sign of a poor model, is not incompatible with the fact that $P\left( Y_.\left(\omega\right)\in A\right) = 1/2$ for any $\omega \in \Omega$.
However, a Dirac mass distribution, or a distribution close to a Dirac mass, represents a phenomenon for which the information is sure, or close to sure.
This does not reflect the degree of insecurity associated to an inadequate model.
Consequently, we conclude that the distribution of the random variable $P\left( Y_.\left(\omega\right)\in A\right)$ does not provide relevant information regarding the model quality.

It is not difficult to prove our claim: remark that if the random variables $Y_x$ are pairwise independent and if the measure $P$ is continuous, then the distribution of $P\left( Y_.\left(\omega\right)\in A\right)$ is a Dirac mass.
Thus, just consider a model fulfilling this independence condition and such that $\P \left(Y_x\in A\right)=1/2$ for any $ x\in X $.

The random field alone is not a knowledge representation; it cannot encode the mental procedures involved in belief assessments.
An additional analysis is hence necessary.
We certainly do not have at our disposal a strict condition of membership to the real failure set.
Nevertheless, we can easily compute the following membership function $\Mi$:
\[ \left\{ \begin{array}{l}
\left( X, P \right) \to \left[ 0,1\right] \\
x \mapsto \Mi \left( x \right) := \P \left( Y_x \in A \right).
\end{array} \right. \]
Such a function associates to each point $ x \in X $ a real number in the interval $ \left[ 0, 1 \right] $ measuring the grade of membership of $x$ to the failure set.
Thus, the crisp failure set, unknown, can be only imprecisely characterized by the membership function $\Mi$.
Due to incomplete knowledge, we can only access to the fuzzy version of the failure set.
According to \cite{Zadeh}, "such a framework provides a natural way of dealing with problems in which the sources of imprecision is the absence of sharply defined criteria of class membership rather than the presence of random variables."

By definition, the notion of "belonging" is not well-defined for a fuzzy set.
For a given grade of membership $ \Mi \left( x \right) $, a decision-maker may consider that $x$ belongs to the failure set whereas another may not, depending on its own risk tolerance.
In order to account for the subjective nature of the decision, we introduce the level $\a$, $ 0 \leq \a \leq 1 $, and agree to say that "$x$ belong the failure set" if $ \Mi \left( x \right) > \a $.
The threshold $\a$ determines the status to give to uncertain predictions.
These data are therefore aggregated in two classes: "good" and "poor".
Let us define the $\a$-level failure set:
\[ \left\{ x \in X \mid \Mi \left( x \right) > \a \right\} .\]
It is a crisp subset of $X$ and can be consequently measured:
\[ R \left( \a \right) := P \left( \left\{ x \in X \mid \Mi \left( x \right) > \a \right\} \right) .\]
It is the risk probability estimation by a decision-maker whose risk tolerance is $\a$.
Its distribution is a Dirac mass at the point $ R \left( \a \right) $, denoted by $ \d_{R \left( \a \right)} $.
We implicitly define here a causal model: the risk probability is conditional on the risk tolerance $\a$ (\cite{Pearl}).
Being unsure of the decision-maker risk tolerance, we consider $\a$ as a random variable on $ \left( \left[ 0,1 \right], \eta\right) $.
The distribution $\eta$ stands for the risk tolerance distribution of the decision-maker.
In this framework, the risk probability is finally defined as a random variable with distribution
\[ \Ri := \int_0^1 \d_{R \left( \a \right)} \text{ } \eta \left( \textrm{d}\a \right) .\]
This distribution of the risk is well-defined as soon as we set the distribution $\eta$ of the threshold $\alpha$.

\subsection{Uniform distribution for the accident threshold}
\label{UniformDistribution}
In this sub-section, we discuss the choice of the distribution $\eta$ of the threshold $\a$.

A natural output in risk assessment is the average failure probability, $ E \left( \P \left( Y_x \in A \right)\right)$ in our case.
It is an estimator of the $P$-measure of the failure set based on the known response values $ \left( y_i = y \left( x_i \right) \right)_{1 \leq i\leq n} $.
If we now suppose the observation set $ \left( x_i \right)_{1 \leq i\leq n} $ covers $X$, then $ \Mi \left( x \right) $ is the characteristic function of $ \left\{ x \in X \mid y \left( x \right) \in A \right\} $ and consequently $ E \left( \P \left( Y_x \in A \right)\right) = P \left( y \left( x \right) \in A \right) $.
The estimator is therefore convergent if the factor space $X$ is appropriately sampled.
As we will see in the sequel, the causal model introduced in Section~\ref{FailureRisk} can be interpreted as an operator acting on the space of probability measures on $\left[0,1\right]$, which links the failure risk and the failure probability distributions.
Thus, it seems clever to keep the first moment of the latter when the transform is applied.
In that way, the mean failure risk remains a convergent estimator of the failure set measure.
This remark leads us to the first proposition.

If the model $\left(Y_x\right)$ is not available, then the failure probability is unknown.
The uniform (non-informative) prior appears intuitively as a reasonable choice here to represent ignorance, considering the principle of indifference.
In such situation, the failure risk distribution should also be uniform, meaning that the state of uncertainty has been preserved.
Based on this invariance property, we state the second proposition.

First of all, we introduce some definitions and notations, useful for the subsequent demonstrations.
As above, for each $ \a \in \left[ 0,1 \right]$, we set:
\[ R \left( \a \right) := P \left( \P \left( Y_x \in A \right) > \a \right) .\]
The function $R$ is the complementary cumulative distribution function (or tail distribution) of the random variable $\Mi \left( x \right) = \P \left( Y_x \in A \right) $, defined on the probability space $ \left( X,P \right) $.
Now, we can also look at $R$ as a random variable defined on the probability space $ \left( \left[ 0,1 \right],\eta \right)$, and we define, for all $ t \in \left[ 0,1 \right] $:
\[ G \left( t \right) := \eta \left( \left\{ \a \in \left[ 0, 1 \right] \mid R \left( \a \right) > t \right\} \right) = \eta \left( R \left( \a \right) > t \right) .\]
So $G$ is the tail distribution of the random variable $R$.

Moreover, we denote by $K$ the cumulative distribution function of $\eta$: $ K \left( u \right) = \eta \left( \left[ 0,u \right] \right)$ for all $u \in \left[ 0,1 \right] $.
Finally, let us recall the definition of the generalized inverse of the decreasing function $R$.

\begin{definition}[Generalized inverse]
Let $S$ a decreasing right continuous function on the interval $ \left[ 0,1 \right] $ with $ S \left( 0 \right) \leq 1 $ and $ S \left( 1 \right) = 0 $.
For all $ t \in \left[ 0,1 \right] $, we set:
\[ S^{-1} \left( t \right) = \sup \left\{ \a \in \left[ 0,1 \right] : S \left( \a \right) > t \right\}. \]
\end{definition}
At this stage, it is not difficult to check that $ G = K \circ R^{-1} $.
Let bethink us of this operator, which maps $G$ to $R$, in a more general setting.
The probability measure~$\eta$ is assumed fixed.
To any probability measure $m$ on the interval $ \left[ 0,1 \right] $, we can associate another probability measure~$m'$ on $ \left[ 0,1 \right] $ in the following way:
\begin{itemize}
\item firstly denote by $F_m$ the function $ F_m \left( \a \right) = m \left( (\a,1] \right) $,
\item secondly define $ F_{m'} = K \circ F_m^{-1} $.
\end{itemize}
Denote by $L_{\eta}$ the operator, acting on the space of probability measures on $ \left[ 0,1 \right] $, which associates $m'$ to $m$.

\begin{proposition}
${}$
\begin{enumerate}
\item The operator $ L_{\eta} $ preserves the first moment of the probability (i.e. $ \int_0^1 t \text{ } m \left( {\rm d} t \right) = \int_0^1 t \text{ } m' \left( {\rm d} t \right) $ for all $m$) if and only if the probability $\eta$ is uniform on $ \left[ 0,1 \right] $.
\item If the probability $\eta$ is uniform, then the operator $ L_{\eta} $ is its own inverse, meaning that $ \left( m' \right)' = m $.
\end{enumerate}
\end{proposition}

\begin{proof}[Proof of Proposition 1]
The first moment of the probability measure $m$ associated with $ F_m $ is equal to:
\[ \int_0^1 F_m \left( \a \right) \textrm{ d}\a .\]
The following neat identity can be proved:
\[ \int_0^1 F_m \left( \a \right) \textrm{ d}\a = \int_0^1 F_m^{-1} \left( \a \right) \textrm{ d}\a .\]
Thus, the first moment of $m$ is equal to the first moment of $m'$ if and only if:
\[ \int_0^1 K \left( F_m^{-1} \left( \a \right) \right) \textrm{ d} \a = \int_0^1 F_m^{-1} \left( \a \right) \textrm{ d}\a .\]
It can be easily verified that the identity map is the unique function $K$ which satisfies this equality for all choice of $m$.
This means that $\eta$ has to be a uniform distribution.
It is not difficult to check point 2.
\end{proof}

\begin{proposition}
The uniform distribution $\eta$ is the unique distribution for which the operator $ L_{\eta} $ applied to a uniform distribution gives a uniform distribution.
\end{proposition}
The proof does not present any difficulty.

In the sequel, $\eta$ is chosen as the uniform distribution.
At this stage of our presentation, the distribution of the risk probability is defined by
\[ \Ri := \int_0^1 \d_{R \left( \a \right)} \textrm{ d}\a .\]

\subsection{Monte Carlo approach}
\label{MCapproach}
We describe in this sub-section how our reasoning changes according to practical possibilities.

As announced in Section~\ref{Intro_FailureRisk}, the quantity $ R \left( \a \right) $ has an integral formulation and cannot be expressed analytically in practice:
\[ R \left( \a \right) = \int_X \1_{\P \left( Y_x \in A \right) > \a} \text{ } P\left(\textrm{d}x\right) .\]
We use a MC method to get a numerical approximation of $ R \left( \a \right) $.
More precisely, an importance sampling is performed: independent samples $ \left( x_m \right)_{1\leq m\leq M} $, in the factor space $X$, are generated from the distribution $P$.
For more details on the related preferential MC method, see 4.5 p. 19 of \cite{Caflish}.
For each point $x_m$, the failure probability can be computed numerically since the Student distribution is tabulated.
Denote by $ n\left(\a\right) $ the number of MC draws $x_m$ such that $ \P \left( Y_{x_m} \in A \right) > \a $.
The MC estimator of $ R\left(\a\right) $ is the quotient $ n \left( \a \right)/M $.

Note that a classical Bayesian reasoning is used to model the uncertainty due to finite sampling.
In this approach, the MC sampling is considered as a binomial experiment.
This assumption is thoroughly justified for a good quality pseudo random number generator.
As a consequence, $R\left(\a\right)$ is a random variable whose distribution is the beta distribution $\beta_{\a}$ with shape parameters $ \left( n\left(\a\right) + 1, M - n\left(\a\right) + 1 \right) $, assuming a uniform prior.
Adding this additional probabilistic stratum in the model, we may appear a little bit pernickety.
Nevertheless, it guards us against the basic (but widespread) fallacy which consists in presuming that a system is perfectly safe as long as no accident has been observed.

In conclusion, taking into account the numerical aspects of the problem, we define the distribution of the risk probability as
\[ \Ri := \int_0^1 \beta_{\a} \textrm{ d}\a .\]

\subsection{Global strategy}
\label{GlobalStrategy}
From a set of virtual experiments, the computer code emulator (Student field) is derived.
A stochastic simulation is then performed (uncertainty analysis) to extract the failure risk distribution.
Standard statistical quantities such as the mean, the standard deviation and a confidence interval are available to the decision-maker for risk assessment.
The confidence interval indicates the reliability of the estimate, the sources of uncertainty being the partial knowledge of the original function $y$ and the finite MC sampling.

If the prediction quality is too low, new data points have to be added to the observations set.
Let us describe succinctly a way to proceed.
Note first that the MC simulation only requires model evaluations.
Consequently, we have full scope to complete the observations set; it is in no way linked to the factors distribution $P$.
We could sample data uniformly in $X$ but, as each datum "costs the earth", it is strongly recommended to structure the data collection process.
This issue is addressed by experimental design techniques, which intend to optimize the information gathering.
We retain the differential entropy $ h \left( Y_x \right) := \E \left( - \log f \left( Y_x \right) \right) $, where $f$ is the density function of the random variable $Y_x$, as a measure of the lack of information (\cite{Cover}).
The information provided to the probabilistic model is increased if observations are made at locations in $X$ where the entropy is maximum.
The corresponding optimization problem is multi-modal: the objective is to find a set of local maxima for $ h \left( Y_x \right) $.
It can be solved by conventional methods such as gradient descent or metaheuristics as evolutionary algorithms.
We refer to \cite{Talbi} for a complete presentation of metaheuristics.
Once the observation candidates are identified, we are back to step one...
The iterative procedure is repeated until the precision requirements regarding the failure risk are met or the due date for the risk analysis is reached.

\subsection{Example and results}
\label{Examples}
In order to illustrate the interest of the risk assessment scheme presented in this article, three theoretical examples described in Appendix~\ref{AppendixExamples} are considered.
Using test functions with closed-form expressions, the failure probability can be calculated exactly.
Therefore, estimates can be compared to the true value.

The Gaussian mixture prior defined in Section~\ref{StudentField} is kitted out with the anisotropic $\g$-exponential correlation function (see Appendix~\ref{correlationfunction}) and trained using the MLE method described in Section~\ref{MLE}.
The resulting model has $D+1$ parameters, a reasonable level of complexity.
The factor distribution $P$ is assumed uniform.

As an alternative to our model-based Monte Carlo (MMC) method, we consider the brute-force Monte Carlo (BMC) method, recalled in Section~\ref{MCapproach}.
Let $k$ the number of defectives samples, i.e. such that $ y \left( x \right) \in A $, from $M$ trials.
The inferred failure risk follows the beta distribution with shape parameters $ \left( k+1,M-k+1 \right) $.
Note that this trivial model implicitly assumes a sampling of the factor space according to the distribution $P$ (uniform).
We used a quasi-random rather a pseudo-random source: $ \left( x_m \right)_{1 \leq m \leq M} $ are chosen as elements of the Sobol low discrepancy sequence.
Such sampling covers the factor space more evenly, a desirable property to obtain a relevant statistical population.
For the MMC and the BMC methods, the mean value and the boundaries of the smallest confidence interval, stated at the $90\%$ confidence level, are plotted as a function of the number $M$ of evaluations, $ 10\leq M\leq 1000 $.

Before going further, it is important to have in mind that a confidence interval only provides a statistical estimation of the error on the result.
It does not imply a strict condition of membership to the interval.
Indeed, whatever the sampling method used, this one may not catch the essential features of the function~$y$.
As a result, uncertainty can be underestimated, a dangerous situation in risk assessment.
This is particularly true for BMC, which does not take into account the spatial distribution of the data (geometrical structure of the space $X$).
This is the key difference with the MMC method we propose.
As a consequence, the comparison of the convergence properties of MMC and BMC should not focus on confidence intervals.

\subsubsection{Quadric example}
Let us start with the quadric example, considering a space $X$ of dimension $D=5$ and a true failure risk $ r_t = 1.008 \times 10^{-1} $ (the theoretical expression is available in the supplementary materials).
See Figure~\ref{figureQuadric} for a comparison of the convergence rates of the model-based and brute-force Monte Carlo.
It turns out that, in this case, MMC definitely outperforms BMC since it estimates the failure risk as accurate as BMC on 600 samples, requiring a third of the function evaluations.

The prior random process defines a distribution over a function set derived from the covariance function.
The Bayesian update selects a sub-set compatible with the observations.
Thus, the posterior is still random and the failure risk inherits this feature.
Although MMC has an extra source of uncertainty, its confidence interval is smaller.
At least 60 samples are necessary to get a confidence interval size inferior to 0.05.
This lower bound soars to 400 data points using BMC.
Besides, the MMC confidence interval always includes the failure risk true value.
All this suggests that the prior assumptions of the model are appropriate and significantly impact the prediction accuracy, which is not surprising, the function under study being quite simple.

\begin{figure}[h]
\includegraphics[width=3.2in,height=2.27in,angle=0]{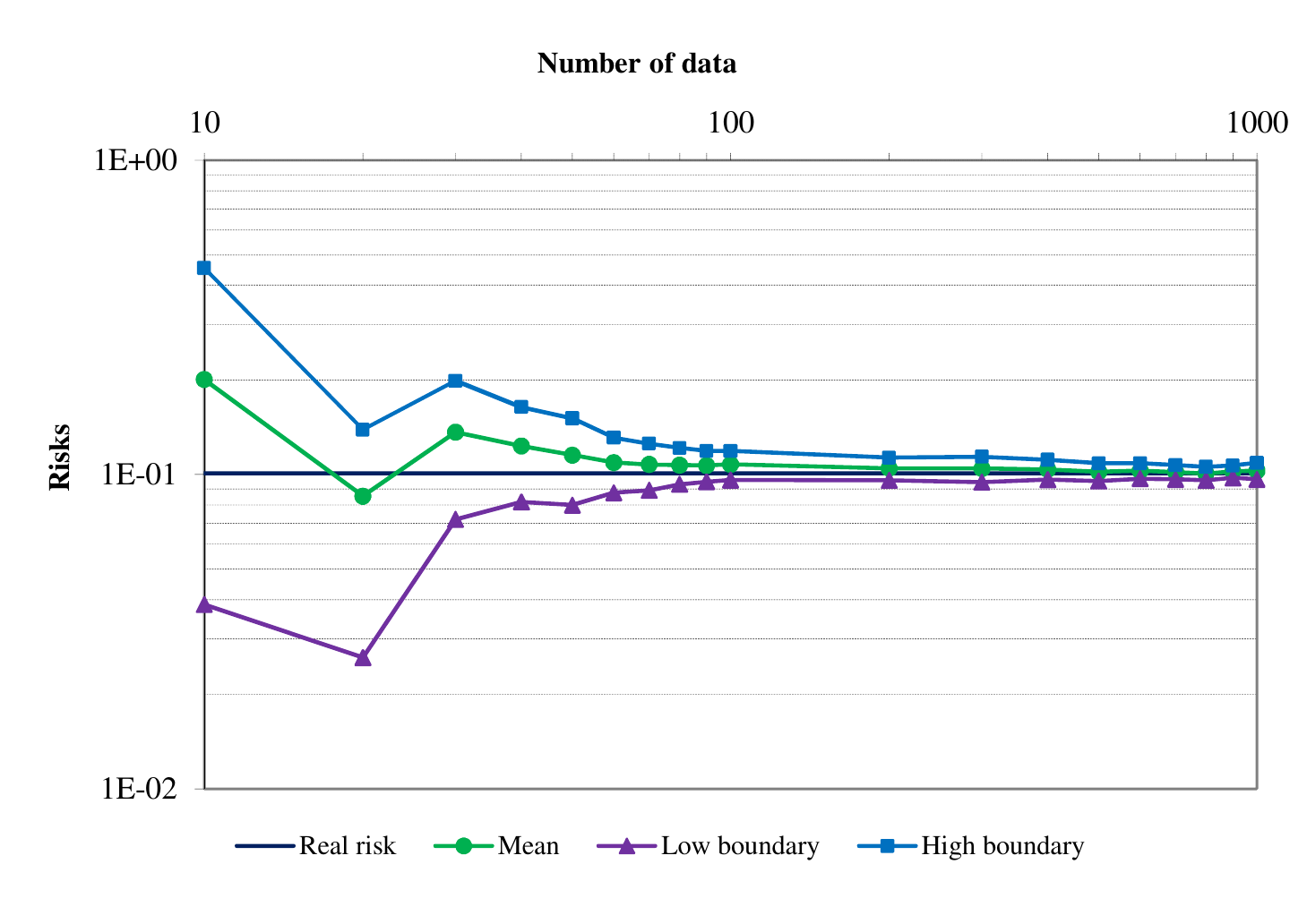}\hfill
\includegraphics[width=3.2in,height=2.27in,angle=0]{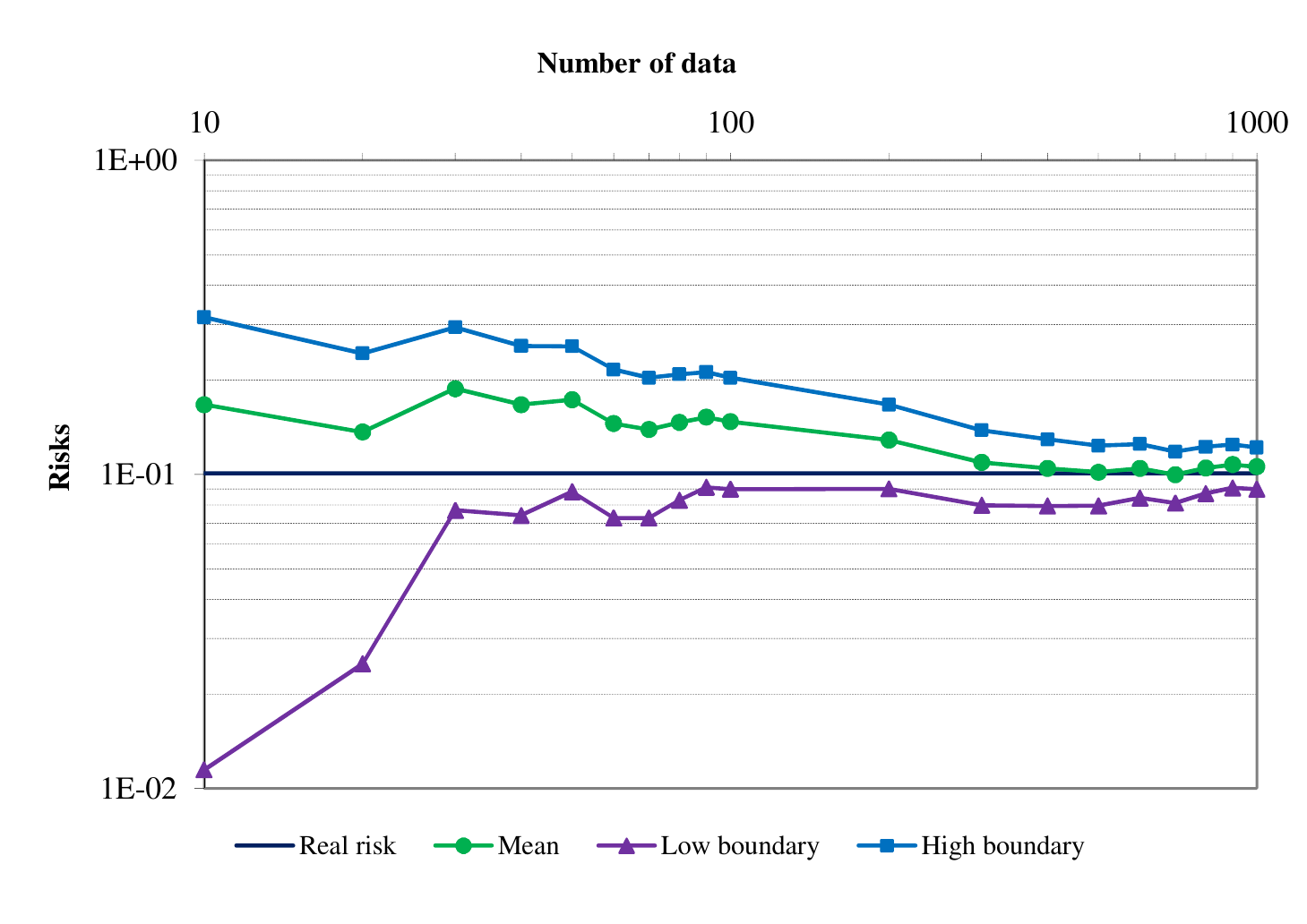}
\caption{Quadric example: convergence plots for MMC method (left) and BMC method (right)}
\label{figureQuadric}
\end{figure}

\subsubsection{Sine example}
The sine example ($D=3$) is a trickier problem since the function to be modeled is oscillating.
However, we expect the BMC method to perform correctly since the true failure risk is set to $ r_t = 2.048 \times 10^{-1} $, a relatively high value.
Indeed, the following rule of thumb regarding the sample size $M$ can be easily derived: $ M >> 1/r_t $.
Such a condition is easily fulfilled in this case.

Both methods give quite similar results, plotted in Figure~\ref{figureSine}.
The relative error on the mean failure risk predicted by MMC is less than $5\%$ beyond 300 samples, to be compared to 400 samples for the statistical mean.
The MMC and BMC confidence intervals size is less than 0.05 respectively beyond 600 and 800 samples.

We note that, below 200 samples, the MMC confidence interval is large, which indicates that the conditioning of the prior model does not restrict much the set of interpolating functions.
The propagation of information is therefore weak and we conclude that the $\gamma$-exponential correlation function does not really suit.
In addition, a spurious pinch of the confidence interval shows up for 30 samples, due to strong variations of the correlation function parameters.
The maximization of the marginal likelihood yields an overly confident estimate of the correlation function parameters posterior that would be obtained by classical Bayesian analysis.
Thus, MLE reaches its limits when information is scarce, firstly because it becomes very sensitive to the training set and secondly because the Bayesian posterior can be hardly approximated by its mode.

\begin{figure}[h]
\includegraphics[width=3.2in,height=2.27in,angle=0]{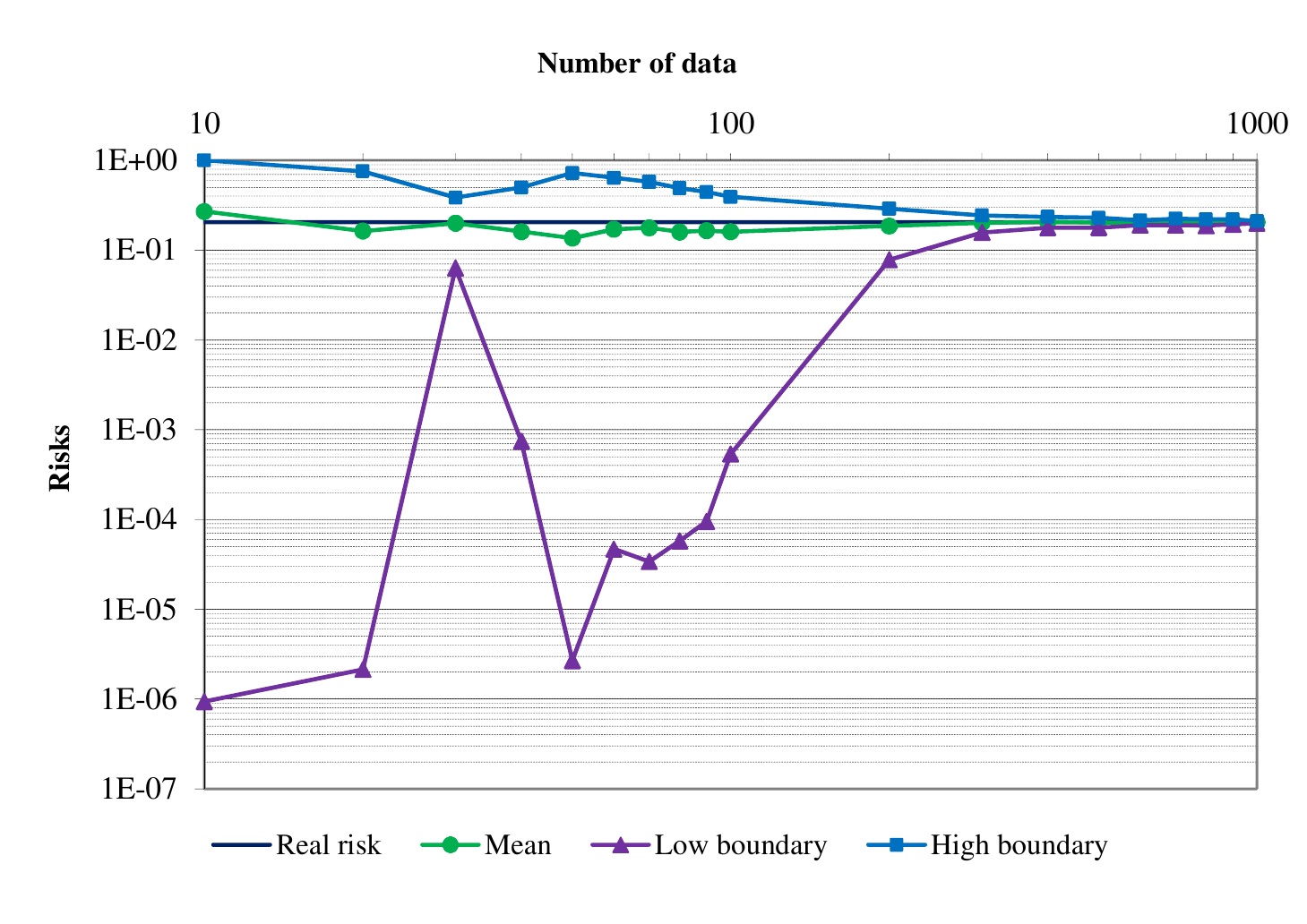}\hfill
\includegraphics[width=3.2in,height=2.27in,angle=0]{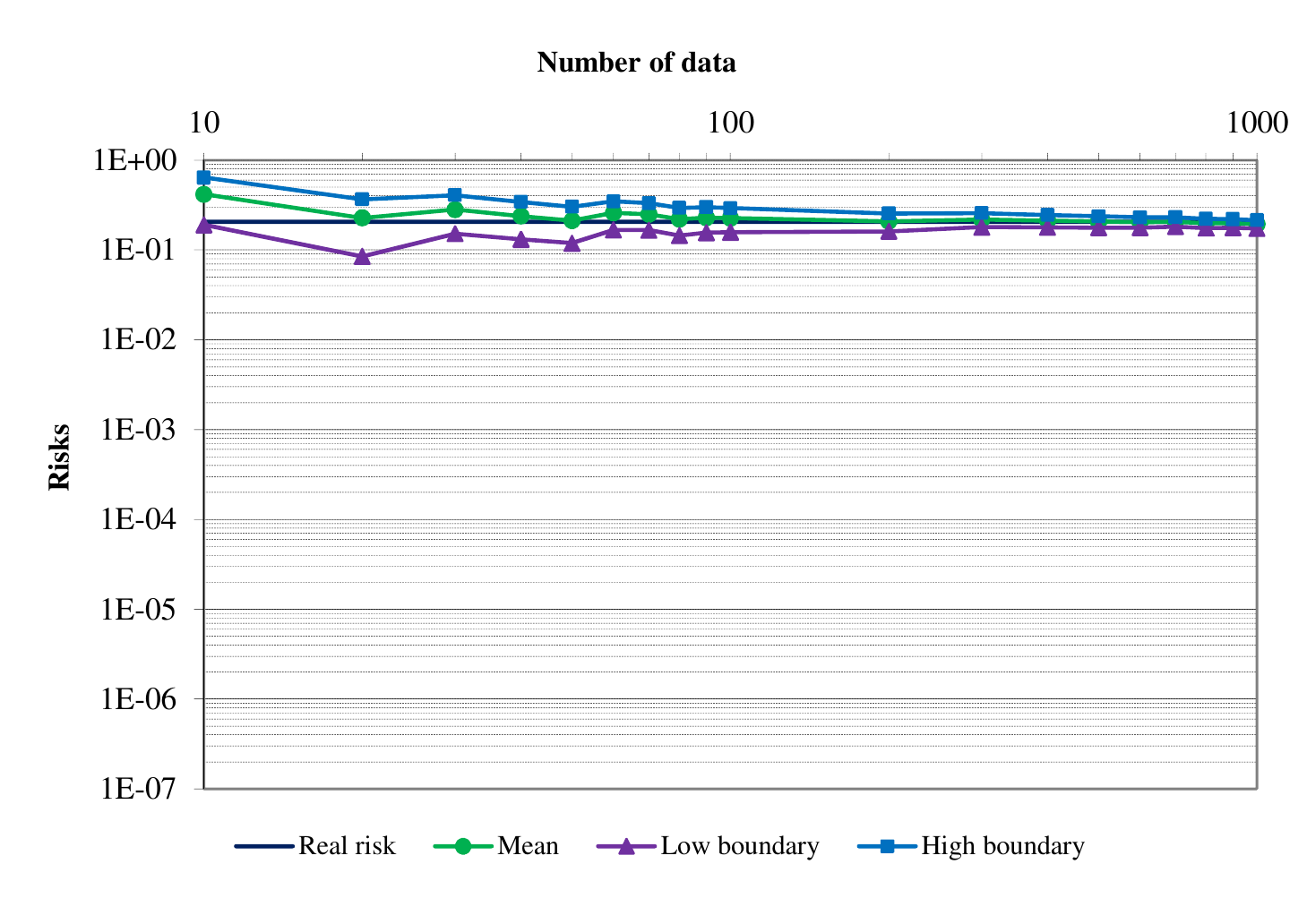}
\caption{Sine example: convergence plots for MMC method (left) and BMC method (right)}
\label{figureSine}
\end{figure}

\subsubsection{Bell-shaped example}
Consider the bell-shaped example ($D=5$).
The benchmark function, as a mixture of 10 multi-dimensional Gaussian functions, is hilly.
The failure set is disconnected: it is the union of 10 small hyperspheres of different radii.
The real failure risk is $ r_t = 4.341 \times 10^{-3} $.

Below 700 draws, the statistical mean decreases as $ O \left( n^{-1} \right) $, which is typical of an under-sampling of the failure set.
Indeed, the first defective part has been observed beyond 800 draws.
At the end of the sampling process ($M=1000$), the relative error on the mean failure risk is still 23\% for MMC and 31\% for BMC.
BMC is here obviously overconfident: the upper boundary of the 90\% confidence interval comes close to touching the risk true value.
MMC prediction is much more robust: the confidence interval safely flanks $r_t$ and the ratio of the mean value to the confidence interval size unambiguously indicates that additional data are required.

The raw analysis of the prediction error (see Section~\ref{ErrorStudy}) gives no evidence to conclude that information is lacking.
Indeed, $ \epsilon_r \leq 3.7\% $ with a probability of 0.9 and $\epsilon_r$ average is $1.7\%$, which is not exactly the signature of a poor model.
In contrast with global sensitivity analysis, discussed in \cite{Oakley}, goodness-of-fit requirements do not only depend upon the visiting probability distribution $P$: good accuracy is also required in this case at the failure set boundaries.
Thus, the decision regarding the model adequacy should not rest on a global criterion.
In addition, focusing on the model performances to validate the failure risk prediction may lead to over-quality: the accident set $A$ should also be taken into account.
The probabilistic risk assessment scheme we propose naturally overcome these difficulties.

\begin{figure}[h]
\includegraphics[width=3.2in,height=2.27in,angle=0]{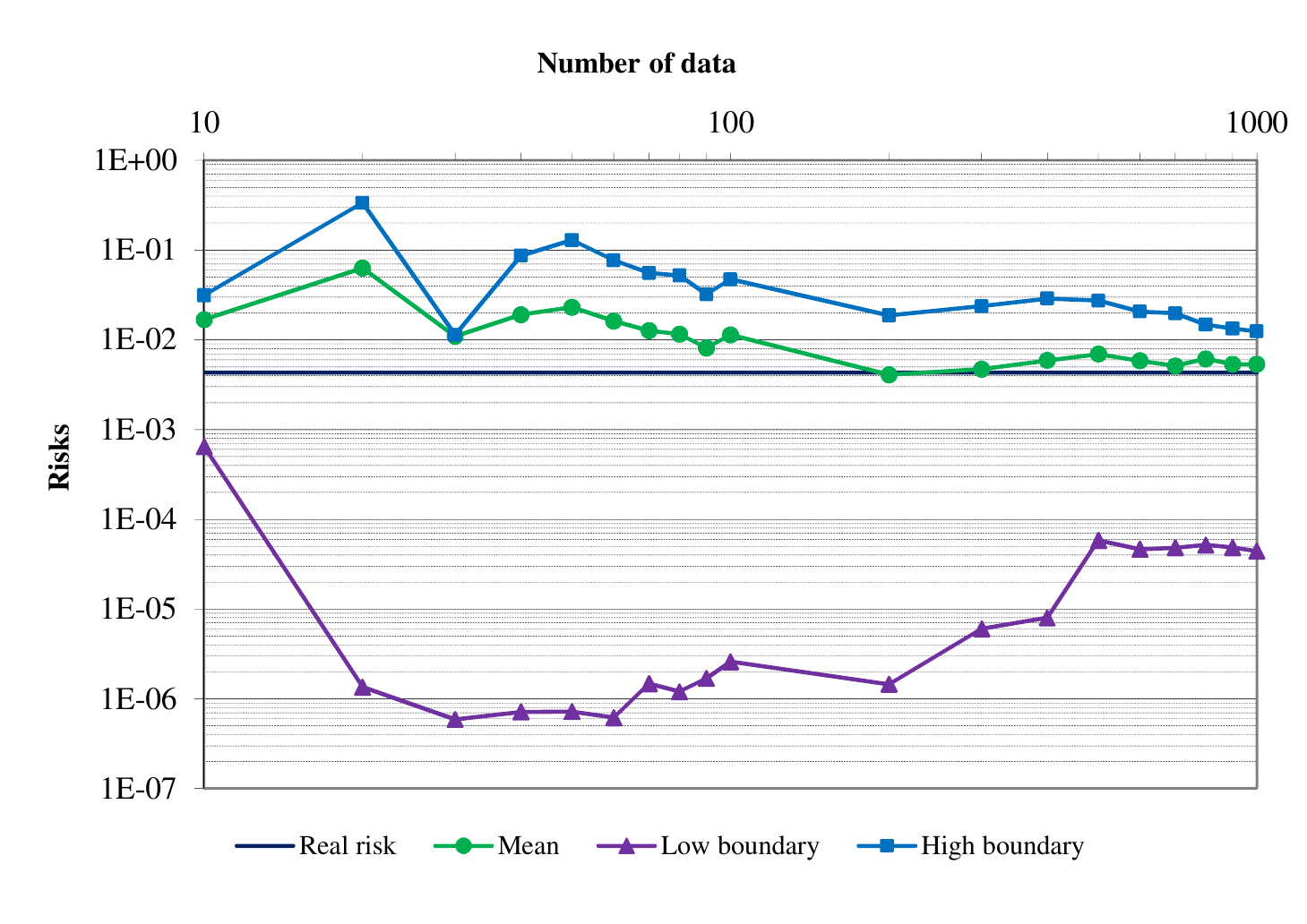}\hfill
\includegraphics[width=3.2in,height=2.27in,angle=0]{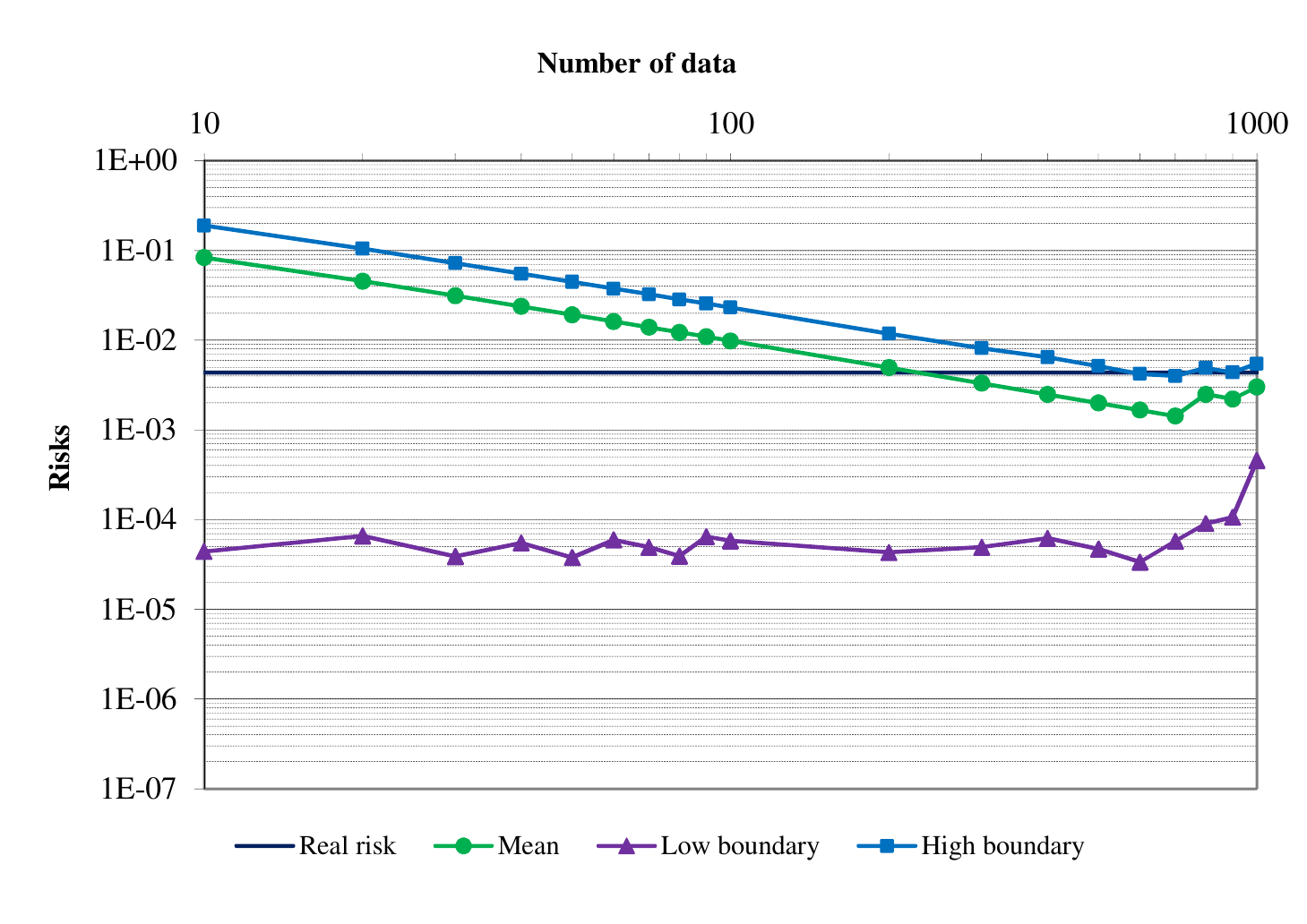}
\caption{Bell-shaped example: convergence plots for MMC method (left) and BMC method (right)}
\label{figureBell}
\end{figure}

\section{Conclusion}
We introduce in this article a data sparing scheme in order to measure the robustness of a design to fabrication fluctuations.
It was evolved in order to offset the lack of information, either because of the complexity of the data acquisition process or due to the dimensionality of the state space.
The method is therefore particularly adapted to virtual experimentation, in which physically realistic computer experiments replace test runs to foresee product performances.
Indeed, simulations duration are often prohibitive; the numerical simulator is considered as a "black-box" function and approximated by an analytical emulator.
In this work, the regression is based on a Student process derived from low-informative priors (uniform) on the location and scale parameters of a Gaussian process.

Risk evaluation can be viewed as a binary classification problem, the product specifications defining the class membership.
Thus, we use the behavioral model as a probabilistic classifier, by interpreting predictive probabilities as degrees of belief.
In this scope, the failure risk naturally comes up as a random variable, randomness modeling the attitude of the decision-maker while exposed to uncertainty.
As a result, the failure risk distribution is a straightforward measure of the impact of the manufacturing process variability on the product performances, in a way that reflects the various sources of uncertainty: incomplete knowledge of the "black-box" function, values of the model variables, numerical approximations...

The relevance of the proposed methodology has been demonstrated on theoretical examples.
This study shows that, for risk assessment, model-based Monte Carlo provides a more reliable estimator of the failure risk than brute-force Monte Carlo.
Although it is not a crucial point in our argument, we noticed that introducing an intermediary analytical representation (i.e. incorporating prior knowledge) may come with a higher convergence rate of the MC sampling, thus reducing the computational effort for a given accuracy.

This work opens up new perspectives for the application of random field regression to engineering risk analysis and we can already give further lines of research:
\begin{itemize}
\item The natural extension of this work is the generalization of the approach in order to address the multi-responses problem.
A solution is to build a random field over a mixed state space including the factors as well as the responses.
The main drawback here is that the space dimension may be increased drastically.
In addition, it may be difficult to find an appropriate correlation function.
A possibly clever alternative is to model each response independently and to compute the upper bound of the failure probability of the union event.
The "worst case" risk probability is then easily derived, applying the procedure prescribed in this article.
\item We can study a more sophisticated regression model of the type
\[ \sum_{j=1}^k U_j f_j\left(x\right) + VW \]
(where $f_j$ are deterministic known functions).
It is the Student extension of some classical Gaussian models extensively described in the literature (see \cite{Santner}).
\item The following ternary classification problem can be investigated.
If the MC samples are separated into three groups:"good", "bad" and "undecidable", it is possible to define a deficiency risk.
Such random variable is a quantitative measure of the random model inability to detect defective parts.
It is intended to notify the decision-maker that the model is not informed enough to draw reliable conclusions.
\item In practice, we also have to control carefully numerical calculations and approximations.
A particularly resistant problem is the definite-positive matrix inversion when the matrix is closed to singular.
\end{itemize}

\if0\blind
{
\section*{Acknowledgments}
This research is developed through a PhD thesis, supported by STMicroelectronics and the \textit{Association Nationale de la Recherche et de la Technologie} (http://www.anrt.asso.fr/) via a CIFRE funding.
} \fi

\appendix
 \section{Appendix: examples}
 \label{AppendixExamples}
Discussions in Sections~\ref{ErrorStudy} and~\ref{Examples} are based on three arbitrary chosen benchmark functions that we describe briefly here and more precisely in the supplementary materials.
In each of these examples, the out of specification space is defined by $ A := \left[ m, + \infty \right)$ for $m\in \R$.

\subsection{Quadric example}
\label{quadric_example}
Fix $m > 0$, $ a_1,a_2,\ldots,a_D > m^2 $, $ X := \left[-1,1 \right]^D $ and for $ \b x := \left( x_1, x_2, \ldots, x_D \right) \in X $:
\[ y \left( \b x \right) := \sqrt{\sum_{i=1}^D a_i x_i^2 } .\]

\subsection{Sine example}
Fix $ a_1,a_2,\ldots,a_D $ non zero integers, $ X:= \left[ 0,1 \right]^D $ and for $ \b x := \left( x_1, x_2, \ldots, x_D \right) \in X $:
\[ y \left( \b x \right) := \sin \left( 2 \pi \sum_{i=1}^D {a_i x_i} \right) .\]

\subsection{Bell-shaped example}
Let $R$ be a positive integer, $ X = \left[ 0,1 \right]^D$ and for $ \b x \in X $ :
\[ y \left( \b x \right) := \max_{1 \leq i \leq R} f_i \left( \b x, \b \mu_i, \s_i \right) \] 
where for all $ 1 \leq i \leq R $, $ \b \mu_i \in \R^D $, $\sigma_i \in \R$, $\sigma_i > 0$ and
\[ f_i \left( \b x, \b \mu_i, \s_i \right) = \frac1{ \left( \sqrt{2\pi} \s_i \right)^D} \exp \left( - \frac1{2\s_i^2} \left\| \b x - \b \mu_i \right\|^2 \right) \]
such that for the fixed threshold $m$, the failure areas associated to each of the $f_i$ do not overlap each other.

\section{Appendix: correlation function}
\label{correlationfunction}
The correlation function chosen in Sections~\ref{ErrorStudy} and~\ref{Examples} belongs to $ \g$-exponential family:
\[ \rho\left( x, x' \right) := \exp \left( - \left( \sum_{i=1}^D \left( \frac{ x_i - x'_i }{l_i} \right)^2 \right)^{ \frac{\g}2} \right) \quad \textrm{for } x, x' \in X .\]
It is stationary and anisotropic.
Roughly speaking, the exponent $ \g \in \left(0, 2 \right] $ controls the smoothness of the random process (it is mean square differentiable only when $\gamma=2$) and the positive real numbers $ l_1, l_2, \ldots, l_D $ are characteristic length-scales, defining the influence hyper-ellipsoid of an observation point.

\section*{Supplementary material}
\begin{description}
\item[Theorem] Proof of the Theorem 3 (.pdf)
\item[Examples] Full numerical description of the examples. (.pdf)
\end{description}

\bibliographystyle{apalike}
\bibliography{Article}

\end{document}